\documentclass[noinfoline]{article}


\RequirePackage[
amsthm,amsmath
]{imsart}

\usepackage{graphicx}
\usepackage{latexsym,amsmath}
\usepackage{amsmath,amsthm,amscd}
\usepackage{amsfonts}
\usepackage[psamsfonts]{amssymb}
\usepackage{enumerate}

\usepackage{url}

\usepackage{tocvsec2}

\usepackage{epstopdf}
\usepackage{wrapfig}
\usepackage{float}



\startlocaldefs
\numberwithin{equation}{section}

\theoremstyle{plain} 
\newtheorem{theorem}{Theorem}[section]
\newtheorem{corollary}[theorem]{Corollary}

\newtheorem{lemma}[theorem]{Lemma}

\newtheorem{proposition}[theorem]{Proposition}

\theoremstyle{definition} 

\theoremstyle{definition} 

\newtheorem*{ex*}{Example}
\theoremstyle{remark} 

\theoremstyle{remark} 

\newtheorem*{remark*}{Remark}
\numberwithin{equation}{section}
 
\newcommand{\beqa}{\begin{eqnarray}}
\newcommand{\eeqa}{\end{eqnarray}}
 
\newcommand{\bseq}{\begin{subequations}}
 
\newcommand{\eseq}{\end{subequations}}

\newcommand{\dd}{\partial}

\newcommand{\tv}{{\,\operatorname{TV}}}
\newcommand{\ko}{{\,\operatorname{Ko}}}

\newcommand{\sd}{{\,\operatorname{SD}_p}}
\newcommand{\sdq}{{\,\operatorname{SD}_q}}

\renewcommand{\dd}{{\,\operatorname{d}}}

\newcommand{\Ga}{\Gamma}

\newcommand{\la}{\lambda}

\newcommand{\de}{\delta}

\newcommand{\vpi}{\varphi}
\renewcommand{\th}{\theta}
\renewcommand{\Psi}{\overline{\Phi}}

\newcommand{\pd}[2]{\frac{\partial#1}{\partial#2}}

\renewcommand{\P}{\operatorname{\mathsf{P}}} 

\newcommand{\R}{\mathbb{R}}

\newcommand{\tH}{{\tilde{H}}}

\newcommand{\tx}{{\tilde{x}}}

\newcommand{\tP}{{\tilde{P}}}

\renewcommand{\le}{\leqslant}
\renewcommand{\ge}{\geqslant}

 \pagenumbering{arabic}

\endlocaldefs

\begin{document}

\begin{frontmatter}

\title{Exact bounds on the closeness between the Student and standard normal distributions}
\runtitle{Exact bounds on Student's distribution}

%

\begin{aug}
\author{\fnms{Iosif} \snm{Pinelis}\thanksref{t2}\ead[label=e1]{ipinelis@mtu.edu}}
  \thankstext{t2}{Supported by NSF grant DMS-0805946}
\runauthor{Iosif Pinelis}


\address{Department of Mathematical Sciences\\
Michigan Technological University\\
Houghton, Michigan 49931, USA\\
E-mail: \printead[ipinelis@mtu.edu]{e1}}
\end{aug}

\begin{abstract}
Upper bounds on the Kolmogorov distance (and, equivalently in this case, on the total variation distance) between the Student distribution with $p$ degrees of freedom ($\sd$) and the standard normal distribution are obtained. 
These bounds are in a certain sense best possible, and the corresponding relative errors are small even for moderate values of $p$. 
The same bounds hold on the closeness 
between $\sd$ and $\sdq$ with $q>p$. 
\end{abstract}


\begin{keyword}[class=AMS]
\kwd[Primary ]{62E17}
\kwd[; secondary ]{60E15}
\kwd{62E20}
\kwd{62E15}
\end{keyword}

\begin{keyword}
\kwd{Student's distribution}
\kwd{standard normal distribution}
\kwd{Kolmogorov distance}
\kwd{total variation distance}
\kwd{probability inequalities}
\end{keyword}

\end{frontmatter}

\settocdepth{chapter}

\tableofcontents 

\settocdepth{subsubsection}

\theoremstyle{plain} 
\numberwithin{equation}{section}


\section{Summary and discussion}\label{intro} 
The density and distribution functions of Student's distribution with $p$ degrees of freedom ($\sd$) are given, respectively, by the formulas 
\begin{align}
	f_p(x)&:=\frac{ \Gamma
   \left(\frac{p+1}{2}\right)}{\sqrt{\pi p}\, \Gamma \left(\frac{p}{2}\right)}\,
   \left(1+\frac{x^2}{p}\right)^{-(p+1)/2} \quad\text{and} \label{eq:f_p}\\
   F_p(x)&:=\int_{-\infty}^x f_p(u)\dd u \label{eq:F_p}
\end{align}
for all real $x$. 
Most often, the values of the parameter $p$ are assumed to be positive integers. 
However, formula \eqref{eq:f_p} defines a probability density function for all real $p>0$, and, as we shall see, it may be advantageous, at least as far as proofs are concerned, to let $p$ take on all positive real values.  
Let us also extend definitions \eqref{eq:f_p} and \eqref{eq:F_p} by continuity to $p=\infty$, so that 
\begin{equation*}
\text{$f_\infty=:\vpi$ and $F_\infty=:\Phi$ }	
\end{equation*}
are the density and distribution functions of the standard normal distribution (SND). 

The standard normal and Student distributions are clearly among the most common distributions in statistics.  
It is a textbook fact that the $\sd$ is close to the SND when $p$ is large, say in the sense that $f_p(x)\underset{p\to\infty}\longrightarrow f_\infty(x)$ for each real $x$. 
By Scheff\'e's theorem \cite{scheffe}, this implies the convergence of the total variation distance 
\begin{equation}\label{eq:d_TV}
	d_\tv(p)=\frac12\,\int_{-\infty}^\infty|f_p(x)-\vpi(x)|\dd x
\end{equation}
to $0$ as $p\to\infty$. 
In fact, the convergence of the $\sd$ to the SND is presented in \cite{scheffe} as the motivating case. 

Consider also the Kolmogorov distance 
\begin{equation*}
	d_\ko(p):=\sup_{x\in\R}|F_p(x)-\Phi(x)|
\end{equation*}
between the $\sd$ and SND. It is clear that, for any two probability distributions, the Kolmogorov distance between them is no greater than twice the total variation distance, and hence the convergence of the latter distance to $0$ implies that of the former. 

However, in the present case one can say more. 
For any $p$ and $q$ in the interval $(0,\infty]$, let $d_\ko(p,q)$ and $d_\tv(p,q)$ denote, respectively, the Kolmogorov distance and the total variation distance between $\sd$ and $\sdq$, so that $d_\ko(p)=d_\ko(p,\infty)$ and $d_\tv(p)=d_\tv(p,\infty)$. 

\begin{proposition}\label{prop:} \ 
\begin{enumerate}[(i)]
	\item 
	For all $p$ and $q$ such that $0<p<q\le\infty$
\begin{equation}\label{eq:ko,tv}
	\tfrac12d_\tv(p,q)=d_\ko(p,q)=\max_{x\in(0,\infty)}\big(F_q(x)-F_p(x)\big). 
\end{equation}
	\item
Moreover, for each $p\in(0,\infty)$ the distance $d_\ko(p,q)$ is strictly increasing in $q\in[p,\infty]$, and for each $q\in(0,\infty]$ the distance $d_\ko(p,q)$ is strictly decreasing in $p\in(0,q]$. 
In particular, 
\begin{equation}\label{eq:d(p,q)<d(p)}
d_\ko(p,q)<d_\ko(p,\infty)=d_\ko(p)	
\end{equation}
for all $p$ and $q$ such that $0<p\le q<\infty$, and $d_\ko(p)$ is strictly decreasing in $p\in(0,\infty]$. 
\end{enumerate}
Statement (ii) holds as well with $d_\tv$ in place of $d_\ko$. 
\end{proposition}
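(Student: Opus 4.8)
Everything follows from one structural fact: for $0<p<q\le\infty$ the difference $f_q-f_p$ changes sign on $(0,\infty)$ exactly once, passing from $+$ to $-$. I would establish this as follows. For $t\ge0$ put $\phi(t):=\log f_q(\sqrt t)-\log f_p(\sqrt t)$; by \eqref{eq:f_p}, $\phi(t)=\text{const}-\frac{q+1}2\log(1+\frac tq)+\frac{p+1}2\log(1+\frac tp)$ when $q<\infty$, and a one-line differentiation gives $\phi'(t)=\frac{(q-p)(1-t)}{2(p+t)(q+t)}$ (and $\phi'(t)=\frac{1-t}{2(p+t)}$ when $q=\infty$). Thus $\phi$ strictly increases on $[0,1]$, strictly decreases on $[1,\infty)$, and tends to $-\infty$. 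Since $f_p$ and $f_q$ are even probability densities, $\int_0^\infty(f_q-f_p)=0$, so $\phi=\log(f_q/f_p)$ must take both signs; given the shape of $\phi$, this forces one of exactly two sign patterns on $(0,\infty)$: $(+,-)$ if $\phi(0)=\log(f_q(0)/f_p(0))\ge0$, or $(-,+,-)$ if $\phi(0)<0$. So it remains to show $f_q(0)>f_p(0)$, i.e.\ that $f_p(0)=\Gamma(\tfrac{p+1}2)/(\sqrt{\pi p}\,\Gamma(\tfrac p2))$ is strictly increasing in $p\in(0,\infty]$. I would prove this sublemma by writing $1/f_p(0)=2\sqrt p\int_0^{\pi/2}\cos^{p-1}\theta\,\dd\theta$ and showing this Wallis-type quantity is strictly decreasing in $p$; equivalently, denoting the digamma function by $\psi$, that $\psi(\tfrac{p+1}2)-\psi(\tfrac p2)>\tfrac1p$, which follows from $\psi(x+\tfrac12)-\psi(x)=\sum_{k\ge0}\tfrac12\big/\!\big((k+x)(k+x+\tfrac12)\big)$ together with the trapezoidal lower bound $\sum_{k\ge0}a(k)\ge\int_0^\infty a+\tfrac12a(0)$ valid for convex decreasing $a$. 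This gives a unique $x_0=x_0(p,q)\in(0,\infty)$ with $f_q>f_p$ on $(0,x_0)$ and $f_q<f_p$ on $(x_0,\infty)$.

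\textbf{Part (i).} The function $G:=F_q-F_p$ is odd, since $F_p(-x)=1-F_p(x)$; hence $d_\ko(p,q)=\sup_{x>0}|G(x)|$. By the single-crossing fact, $G(0)=0$, $G$ strictly increases on $(0,x_0)$ and strictly decreases on $(x_0,\infty)$ with $G(+\infty)=0$; therefore $G>0$ on $(0,\infty)$ and $\sup_{x>0}|G|=G(x_0)=\max_{x>0}(F_q-F_p)$. For the total variation distance, evenness and $\int_0^\infty(f_q-f_p)=0$ give $d_\tv(p,q)=\int_0^\infty|f_q-f_p|=2\int_0^{x_0}(f_q-f_p)=2\big(F_q(x_0)-F_p(x_0)\big)=2\,d_\ko(p,q)$, which is \eqref{eq:ko,tv}.

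\textbf{Part (ii) and the $d_\tv$ version.} Let $p<q_1<q_2\le\infty$. By part (i) and the shape of $G_1:=F_{q_1}-F_p$ (continuous, positive on $(0,\infty)$, vanishing at $0$ and $+\infty$), $G_1$ attains $d_\ko(p,q_1)$ at some finite $x_1^\ast\in(0,\infty)$. Writing $F_{q_2}-F_p=(F_{q_2}-F_{q_1})+(F_{q_1}-F_p)$ and using the single-crossing fact for the pair $(q_1,q_2)$ (so $F_{q_2}-F_{q_1}>0$ on all of $(0,\infty)$),
\[
d_\ko(p,q_2)\ge\big(F_{q_2}(x_1^\ast)-F_{q_1}(x_1^\ast)\big)+d_\ko(p,q_1)>d_\ko(p,q_1);
\]
together with the trivial $q_1=p$ case this proves strict monotonicity in $q$ on $[p,\infty]$. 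Strict decrease in $p$ on $(0,q]$ is entirely symmetric: for $0<p_1<p_2\le q$ split $F_q-F_{p_1}=(F_q-F_{p_2})+(F_{p_2}-F_{p_1})$, evaluate at an argmax of $F_q-F_{p_2}$, and use $F_{p_2}-F_{p_1}>0$ on $(0,\infty)$. Taking $q=\infty$ gives that $d_\ko(p)=d_\ko(p,\infty)$ is strictly decreasing on $(0,\infty]$. Finally, since $d_\tv=2d_\ko$ by \eqref{eq:ko,tv}, every monotonicity assertion holds verbatim with $d_\tv$ in place of $d_\ko$.

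\textbf{Main obstacle.} Parts (i) and (ii) are then essentially bookkeeping; the real work is the monotonicity sublemma for $f_p(0)$ — equivalently, the sharp inequality $\psi(\tfrac{p+1}2)-\psi(\tfrac p2)>\tfrac1p$ for the digamma function. This is precisely what rules out the $(-,+,-)$ sign pattern and hence is what makes \eqref{eq:ko,tv} an \emph{exact} identity rather than a one-sided bound.
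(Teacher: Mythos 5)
Your proof is correct, and its skeleton is the same as the paper's: a single-crossing lemma for $f_q-f_p$ on $(0,\infty)$ (sign pattern $+,-$), from which both identities in \eqref{eq:ko,tv} follow by symmetry, and from which the monotonicity in $(p,q)$ follows via the positivity of $F_q-F_p$ on $(0,\infty)$. The genuine difference is one of self-containedness. The paper imports from \cite{mono-student} the two nontrivial ingredients: (a) that $\ln r_{p,q}$ is monotone on $[0,1]$ and on $[1,\infty)$ with switch point $1$, and (b) that $f_p(0)$ is increasing in $p$ (equivalently $r_{p,q}(0)<1$), and for part (ii) it additionally cites the stochastic monotonicity $F_p(x)\uparrow$ in $p$ from the same source. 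You prove all of these: (a) via the one-line computation $\phi'(t)=\frac{(q-p)(1-t)}{2(p+t)(q+t)}$, which is exactly the observation underlying \cite{mono-student}; (b) via the digamma identity $\psi(x+\tfrac12)-\psi(x)=\sum_{k\ge0}\tfrac12/\big((k+x)(k+x+\tfrac12)\big)$ and the trapezoidal bound for convex decreasing summands, which reduces to the elementary inequality $\ln(1+\tfrac1p)+\tfrac1{p(p+1)}>\tfrac1p$ — this checks out (the difference has derivative $-1/(p(p+1)^2)<0$ and vanishes at $\infty$); and the stochastic monotonicity you re-derive as the statement $F_q>F_p$ on $(0,\infty)$, a byproduct of your part (i), which you then feed into a decomposition argument at the argmax to get strict monotonicity of the distances. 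What your route buys is a proof with no external dependencies; what the paper's route buys is brevity, since \cite{mono-student} already contains precisely these facts. You correctly identify the monotonicity of $f_p(0)$ as the crux: without it the sign pattern $(-,+,-)$ is not excluded and \eqref{eq:ko,tv} would fail as an identity.
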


This proposition and the other results stated in this section will be proved in Section~\ref{proofs}. 

The Kolmogorov distance and the total variation one are apparently the two most commonly used distances between probability distributions. 
Therefore, it seems natural to consider the rate of convergence of $d_\ko(p)$ and, equivalently, $d_\tv(p)$ to $0$ as $p\to\infty$, which is part of what is done in this paper. 
Actually, the motivation for this study comes from the discussion in \cite{BE-student}. 
In turn, the paper \cite{BE-student} was motivated by developments of \cite{nonlinear}. 

\begin{theorem}\label{th:1/4}
For any real $p\ge4$ 
\begin{equation}\label{eq:<C/p}
\tfrac12\,d_\tv(p)=d_\ko(p)<C/p, 
\end{equation}
where 
\begin{equation}\label{eq:C}
	C:=
	\frac14\sqrt{\frac{7+5 \sqrt{2}}{\pi e^{1+\sqrt2}}}
	=0.158\dots.  
\end{equation}
Moreover, 
\begin{equation}\label{eq:lim=C}
	\lim_{p\to\infty}p\,d_\ko(p)=C,
\end{equation}
so that the constant $C$ is the best possible one in \eqref{eq:<C/p}. 
\end{theorem}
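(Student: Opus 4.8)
The plan is to reduce the statement, via Proposition~\ref{prop:}, to the study of a single critical point, then to read off the exact first-order asymptotics, and finally to upgrade those asymptotics to the claimed non-asymptotic inequality. Taking $q=\infty$ in Proposition~\ref{prop:}(i), one has $d_\ko(p)=\max_{x>0}g_p(x)$ with $g_p:=\Phi-F_p$, so $g_p(0)=g_p(\infty)=0$ and $g_p'=\vpi-f_p$. Set $r_p:=\log f_p-\log\vpi$; then $g_p'$ has the sign of $-r_p$, and a one-line computation gives $r_p'(x)=x(x^2-1)/(p+x^2)$, so $r_p$ decreases on $(0,1)$ and increases on $(1,\infty)$. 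Since $r_p(0)<0$ (equivalently $\Gamma(\tfrac{p+1}2)<\sqrt{p/2}\,\Gamma(\tfrac p2)$, a Gautschi-type inequality) and $r_p(x)\to\infty$, the equation $f_p(x)=\vpi(x)$ has a unique root $x_p\in(1,\infty)$, with $g_p'>0$ on $(0,x_p)$ and $g_p'<0$ on $(x_p,\infty)$. Hence $d_\ko(p)=g_p(x_p)=\int_{x_p}^\infty(f_p-\vpi)=\int_0^{x_p}(\vpi-f_p)$, and the whole problem reduces to controlling this quantity and the location of $x_p$.

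For the limit \eqref{eq:lim=C}, I would first locate $x_p$: with $t=x^2$ the root equation is $\tfrac{p+1}2\log(1+t/p)-\tfrac t2=\log(\sqrt{2\pi}\,c_p)$, where $c_p$ is the constant in \eqref{eq:f_p}; expanding both sides in powers of $1/p$ (using $\Gamma(\tfrac{p+1}2)/\Gamma(\tfrac p2)=\sqrt{p/2}\,(1-\tfrac1{4p}+O(p^{-2}))$) gives $2t-t^2=-1+O(1/p)$, so $x_p\to x_*:=\sqrt{1+\sqrt2}$, the positive root of $t^2-2t-1=0$. Next, $r_p(u)=\tfrac1{4p}(u^4-2u^2-1)+O(p^{-2})$ uniformly on compact sets, hence $f_p(u)-\vpi(u)=\tfrac1{4p}\vpi(u)(u^4-2u^2-1)+O(p^{-2})$ there; controlling the tail of $\int_{x_p}^\infty(f_p-\vpi)$ separately and using the antiderivative identity $\tfrac{\dd}{\dd u}\big(-\vpi(u)(u^3+u)\big)=\vpi(u)(u^4-2u^2-1)$ together with $x_*^4-2x_*^2-1=0$, one obtains $p\,d_\ko(p)\to\tfrac14\,\vpi(x_*)\,x_*(x_*^2+1)$. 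Since $x_*^2=1+\sqrt2$ and $(2+\sqrt2)^2(1+\sqrt2)/2=7+5\sqrt2$, this limit equals the $C$ of \eqref{eq:C}, which also certifies that $C$ is the best constant in \eqref{eq:<C/p}.

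The remaining, genuinely delicate part is the exact bound $p\,d_\ko(p)<C$ for all real $p\ge4$, where the soft estimates above must be replaced by explicit, correctly signed inequalities. One route is to show that $p\mapsto p\,d_\ko(p)$ is increasing on $[4,\infty)$: differentiating $g_p(x_p)$ in $p$, the $\partial_x$ term drops out because $g_p'(x_p)=0$, leaving $\tfrac{\dd}{\dd p}(p\,d_\ko(p))=d_\ko(p)+p\int_{x_p}^\infty\partial_p f_p(u)\,\dd u$, whose sign can be settled after the substitution $t=u^2$ via monotonicity of an explicit ratio involving digamma functions; combined with \eqref{eq:lim=C} this yields the bound. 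An alternative is direct: bracket $x_p$ by solving the root equation with sharp two-sided Gamma-ratio and logarithmic bounds, then estimate $d_\ko(p)=\int_0^{x_p}\vpi(u)(1-e^{r_p(u)})\,\dd u$ using that $|r_p|=O(1/p)$ is small on $[0,x_p]$ together with $1-e^{r}\le-r-\tfrac{r^2}2+\tfrac{|r|^3}6$ for $r\le0$, thereby reducing the claim to an elementary inequality in $p$ whose second-order term already has the right sign. In either approach the hypothesis $p\ge4$ is precisely what makes the needed Gamma-ratio, logarithmic and exponential estimates hold simultaneously with the correct signs; carrying out this bookkeeping, likely with some computer assistance, is the main obstacle, while everything else is reduction and routine Taylor expansion.
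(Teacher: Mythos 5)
Your reduction (via Proposition~\ref{prop:}(i) with $q=\infty$ and the sign change of $\vpi-f_p$ at a unique $x_p>1$) and your computation of the limit \eqref{eq:lim=C} are correct and essentially coincide with the paper's: the paper likewise shows that $\partial_a f_{1/a}(x)|_{a=0}=\la(x)=\tfrac14(x^4-2x^2-1)\vpi(x)$, integrates the antiderivative $-\vpi(u)(u^3+u)$ up to $\tx_0=\sqrt{1+\sqrt2}$, and combines the resulting lower bound $p\,d_\ko(p)\ge p\big(F_\infty(\tx_0)-F_p(\tx_0)\big)\to C$ with the upper bound $C/p$. (The paper's version is slightly cleaner in that it evaluates $F_\infty-F_p$ at the \emph{fixed} point $\tx_0$ and so never needs to locate $x_p$ or control the tail beyond $x_p$; your route works too but requires the extra bookkeeping you mention.)

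The genuine gap is the first half of the theorem, the non-asymptotic inequality $d_\ko(p)<C/p$ for all real $p\ge4$, which is the technical heart of the result. You offer two strategies but execute neither. Route (a) --- monotonicity of $p\mapsto p\,d_\ko(p)$ on $[4,\infty)$ --- is itself an unproved and delicate claim: the identity $\tfrac{\dd}{\dd p}(p\,d_\ko(p))=d_\ko(p)+p\int_{x_p}^\infty\partial_p f_p$ is correct, but settling its sign requires sharp two-sided control of a digamma-type expression \emph{and} of the moving endpoint $x_p$, i.e., a problem of comparable difficulty to the original one; "can be settled via monotonicity of an explicit ratio" is an assertion, not an argument. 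Route (b) underestimates the precision required: a crude third-order bound on $1-e^{r_p}$ with $|r_p|=O(1/p)$ loses constants, whereas the target inequality is sharp in the limit (the relative slack at $p=4$ is only about $5\%$), so every intermediate estimate must be correctly signed and accurate to second order in $1/p$. This is exactly why the paper interposes the explicit bound $B(a,\tx_a)$: it needs the second lower bound $L_2(a)$ on the Wallis ratio from \cite{wallis-ratio} (Lemma~\ref{lem:wallis}), a \emph{tweaked} third-order Taylor polynomial $\tH_2(a,x)$ that is provably an upper bound for $H(a,x)$ on the relevant range (Lemma~\ref{lem:H<tH}), and then the comparison $B(a,\tx_a)<C/p$ (Lemma~\ref{lem:B<C/p}); each of these steps is certified by reducing to polynomial systems (of degree up to $20$ in $x$) handled by Tarski's algorithm. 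Without constructing such an intermediate algebraic majorant --- or an equally explicit substitute --- your sketch does not yield the stated inequality, so the proof of \eqref{eq:<C/p} is incomplete.
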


%

In what follows, it is assumed by default that 
\begin{equation*}
	a:=1/p. 
\end{equation*}
Theorem~\ref{th:1/4} is based on 

\begin{theorem}\label{th:bound}
For any real $p\ge\frac{50}{29}$ 
\begin{equation}\label{eq:<B}
\tfrac12\,d_\tv(p)=d_\ko(p)<B(a,\tx_a),
\end{equation}
where 
\begin{align*}
B(a,x):=\frac a{768} \Big( & 8 x \big[5 a^2 x^2+a (3 x^6-7 x^4-5 x^2-3)+24(x^2+1)\big] \vpi(x) \\
&+33 a^2 (2 \Phi(x)-1)\Big) 
\end{align*}
and $\tx_a$ is, for any $a\in(0,1)$, the unique real root $x>0$ of the polynomial equation 
\begin{multline}\label{eq:P}
	P(a,x):=-96 (x^4-2 x^2-1)-4 a (3 x^8-28 x^6+30 x^4+12 x^2+3)\\
   -a^2 (20 x^4-60 x^2-33)=0. 
\end{multline}
\end{theorem}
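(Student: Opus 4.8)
The plan is to reduce the assertion, via Proposition~\ref{prop:}, to an estimate for the maximum over $x>0$ of $h(x):=\Phi(x)-F_p(x)$, then to bound $h$ pointwise by $B(a,\cdot)$ using a controlled second‑order expansion of $f_p$ in powers of $a=1/p$, and finally to use a closed form for $\partial_x B$ to replace $B(a,x^*)$ by the claimed $B(a,\tx_a)$. For the reduction: by Proposition~\ref{prop:}(i), $\tfrac12d_\tv(p)=d_\ko(p)=\max_{x>0}h(x)$, where $h(0)=0$ and $h'(x)=\vpi(x)-f_p(x)$. Writing $f_p(x)=\vpi(x)\,e^{L_a(x)}$, in the variable $t=x^2$ the exponent becomes a function $\widehat L$ with $\widehat L\,'(t)=\frac{t-1}{2(p+t)}$, so $\widehat L$ decreases on $(0,1)$ and increases on $(1,\infty)$; since $f_p(0)<\vpi(0)$ — equivalently $\Gamma\!\big(\tfrac{p+1}2\big)<\sqrt{p/2}\,\Gamma\!\big(\tfrac p2\big)$, a classical inequality — we have $\widehat L(0)<0$, while $\widehat L(t)\to+\infty$ as $t\to\infty$. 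Hence $\widehat L$ has a unique zero $t^*>1$; with $x^*:=\sqrt{t^*}>1$ we get $L_a<0$ on $(0,x^*)$, $L_a>0$ on $(x^*,\infty)$, so $h$ increases on $(0,x^*)$ and decreases on $(x^*,\infty)$ and
\begin{equation*}
d_\ko(p)=h(x^*)=\int_{x^*}^{\infty}\big(f_p(u)-\vpi(u)\big)\dd u=\int_{x^*}^{\infty}\vpi(u)\big(e^{L_a(u)}-1\big)\dd u ,
\end{equation*}
an integral with nonnegative integrand.

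For the pointwise bound, the alternating expansion $\log(1+s)=s-\tfrac{s^2}2+\tfrac{s^3}3-\cdots$ together with the Stirling expansion $\log\frac{\Gamma((p+1)/2)}{\Gamma(p/2)}=\tfrac12\log\tfrac p2-\tfrac a4+O(a^3)$ of the constant $c_p$ in \eqref{eq:f_p} gives $L_a(u)=a\,\frac{u^4-2u^2-1}{4}+a^2\,\frac{3u^4-2u^6}{12}+r_a(u)$. I would split $\int_{x^*}^{\infty}=\int_{x^*}^{u_0}+\int_{u_0}^{\infty}$ for a fixed $u_0$ exceeding $\sup_{p\ge 50/29}x^*$. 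On $[x^*,u_0]$: bound $r_a$ by an explicit polynomial times $a^3$ (using $\log(1+s)\ge s-\tfrac{s^2}2+\tfrac{s^3}3-\tfrac{s^4}4$ for $s\ge0$ and a two‑sided Stirling estimate), use $e^t-1\le t+\tfrac12 t^2e^t$ on $t=L_a(u)\ge0$ so that $\vpi(u)(e^{L_a(u)}-1)\le L_a(u)\vpi(u)+\tfrac12 L_a(u)^2 f_p(u)$, and integrate via $\int_x^\infty u^{2k}\vpi(u)\dd u=(\text{poly})\vpi(x)+(2k-1)!!\,(1-\Phi(x))$; the identities $\int_x^\infty(u^4-2u^2-1)\vpi(u)\dd u=x(x^2+1)\vpi(x)$ and $\int_x^\infty(3u^8-28u^6+30u^4+12u^2+3)\vpi(u)\dd u=x(3x^6-7x^4-5x^2-3)\vpi(x)$ reproduce exactly the $O(a)$ and $O(a^2)$ parts of $B(a,x^*)$. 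On $[u_0,\infty)$: $\int_{u_0}^\infty(f_p-\vpi)\dd u\le 1-F_p(u_0)$, which is exponentially small in $p$. The remaining $O(a^3)$ terms of $B$, namely $\frac{a^3}{768}\big(40x^3\vpi(x)+33(2\Phi(x)-1)\big)$, are designed to dominate both the polynomial‑times‑$a^3$ remainder from $[x^*,u_0]$ and the exponentially small tail from $[u_0,\infty)$; carrying this out yields $h(x^*)\le B(a,x^*)$, with strict inequality because the pointwise estimate is strict on a set of positive measure.

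For the passage from $x^*$ to $\tx_a$, a direct differentiation gives the clean identity
\begin{equation*}
\partial_x B(a,x)=\frac{a\,\vpi(x)}{384}\,P(a,x).
\end{equation*}
Since $P(a,0)=96-12a+33a^2>0$, $P(a,x)\to-\infty$ as $x\to\infty$ (leading term $-12ax^8$), and — by an elementary argument, which is also what makes $\tx_a$ well defined — $P(a,\cdot)$ has exactly one positive zero $\tx_a$, it follows that $P(a,\cdot)>0$ on $(0,\tx_a)$ and $P(a,\cdot)<0$ on $(\tx_a,\infty)$; hence $B(a,\cdot)$ increases on $(0,\tx_a)$, decreases on $(\tx_a,\infty)$, so $B(a,x^*)\le B(a,\tx_a)$. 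Chaining the three parts, $\tfrac12 d_\tv(p)=d_\ko(p)=h(x^*)<B(a,x^*)\le B(a,\tx_a)$, which is \eqref{eq:<B}.

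The main obstacle is the middle part: making the remainder $r_a$, and then the remainder in $e^{L_a}-1$, quantitatively small \emph{uniformly in $u$ and in $p\ge\frac{50}{29}$}, so that its integral fits inside the $O(a^3)$ part of $B$. Two features make this delicate. First, the $\Gamma$‑ratio in $c_p$ must be handled with a Stirling bound carrying an explicitly estimated error past the $-\tfrac a4$ term (the vanishing of the $a^2$ term there is useful but the $a^3$ remainder must still be controlled). Second, for large $u$ the expansion of $L_a(u)$ in $a$ is not uniformly valid — indeed $L_a(u)\to+\infty$ as $u\to\infty$ — which is why the split at $u_0$ and the crude but sufficient bound $1-F_p(u_0)$ are needed there. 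Verifying that every resulting error term is absorbed by the explicit $O(a^3)$ terms of $B$ for \emph{all} $p\ge\frac{50}{29}$, and not merely for large $p$, is exactly the source of the threshold and the real work of the proof.
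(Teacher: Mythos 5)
Your overall architecture is the same as the paper's: use the single-crossing of $f_p$ and $\vpi$ (your computation $\widehat L\,'(t)=\frac{t-1}{2(p+t)}$ is a clean, self-contained version of what the paper imports from \cite{mono-student}) to write $d_\ko(p)$ as an integral of $|f_p-\vpi|$, dominate the integrand by a polynomial-times-$\vpi$ expression whose antiderivative is $B(a,\cdot)$, and finish with the correct identity $\partial_x B(a,x)=\frac{a\vpi(x)}{384}P(a,x)$, which makes $\tx_a$ the argmax of $B(a,\cdot)$. Your two Gaussian-moment identities are correct and do reproduce the $O(a)$ and $O(a^2)$ parts of $B$. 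But the proof has two genuine gaps. First, everything you label ``the real work'' --- the uniform-in-$p$ control of the remainder $r_a$ and of the $\Gamma$-ratio error, i.e.\ the analogue of the paper's Lemmas~\ref{lem:H} and \ref{lem:H<tH} --- is not carried out, and it is precisely there that the threshold $p\ge\frac{50}{29}$ lives; a sketch that defers this step has not proved the theorem. (The paper handles it with dedicated algebraic Wallis-ratio bounds valid for all $p>0$ and a Tarski/\texttt{Reduce} verification of the resulting polynomial inequality; ``a two-sided Stirling estimate with explicitly estimated error'' at $a$ as large as $\frac{29}{50}$ is not a routine substitute.) Likewise, the uniqueness of the positive root of $P(a,\cdot)$, a quartic in $x^2$, is asserted as ``elementary'' but never shown, and your monotonicity argument for $B$ collapses without it.

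Second, and more concretely, your choice to represent $d_\ko(p)=\int_{x^*}^{\infty}(f_p-\vpi)$ and split at $u_0$ would fail, or at least require major repair, at the lower end of the range. For $p$ near $\frac{50}{29}$ the Student tail decays only like $u^{-p}$, so $1-F_p(u_0)$ at any $u_0$ of moderate size (say $u_0\approx 2.5$, just above $\sup x^*$) is of order $10^{-1}$ --- an order of magnitude larger than the entire $O(a^3)$ budget $\frac{a^3}{768}\big(40x^3\vpi(x)+33(2\Phi(x)-1)\big)\approx 10^{-2}$ that is supposed to absorb it; it is certainly not ``exponentially small.'' Pushing $u_0$ up does not help: on $[x^*,u_0]$ your bound $\vpi(e^{L_a}-1)\le L_a\vpi+\tfrac12L_a^2f_p$ is only useful when $L_a$ is small, whereas $\tfrac12L_a^2f_p\sim u^{3-p}$ grows without bound for $p<3$, so the middle piece blows up as $u_0$ increases. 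The paper sidesteps this entirely by integrating $f_\infty-f_p$ over the \emph{bounded} interval $[0,x_p]\subset(0,\tfrac{123}{50})$ (equal to the tail integral by symmetry and total mass), where the expansion in $a$ only ever needs to be controlled on a compact $x$-range; if you adopt that form of the integral, your split at $u_0$ and the tail estimate become unnecessary, and the remaining task reduces to the single pointwise inequality $f_\infty(x)-f_p(x)<\tH_2(a,x)/\sqrt{2\pi}$ on $(0,\tx_a]$, together with the (nontrivial) verification that $x_p<\tx_a$.
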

\noindent In fact, it will be shown (Lemma~\ref{lem:B<C/p}) that 
\begin{equation}\label{eq:B<C/p}
	B(a,\tx_a)<C/p
\end{equation}
for all $p\ge4$. 

Note that, since the polynomial equation \eqref{eq:P} is of degree $4$ in $x^2$, the root $\tx_a$ can be expressed in radicals of polynomials in $a$. 

By the triangle inequality, \eqref{eq:<C/p} implies that $\tfrac12\,d_\tv(p,q)=d_\ko(p,q)\le d_\ko(p)+d_\ko(q)<C/p+C/q$ for any real $p$ and $q$ that are no less than $4$. 
Taking \eqref{eq:ko,tv} and \eqref{eq:d(p,q)<d(p)} 
into account, one sees that \eqref{eq:<B} and \eqref{eq:B<C/p} immediately yield better bounds: 

\begin{corollary}\label{cor:} 
For all $p$ and $q$ such that $4\le p<q\le\infty$
\begin{equation}\label{eq:p,q}
	\tfrac12d_\tv(p,q)=d_\ko(p,q)<B(a,\tx_a)<C/p. 
\end{equation}
\end{corollary}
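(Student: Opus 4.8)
The plan is to obtain Corollary~\ref{cor:} by simply concatenating results already in hand, so essentially no new computation is needed; the only thing to check is that the hypotheses of each cited statement are satisfied throughout the range $4\le p<q\le\infty$.

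First, the equality $\tfrac12 d_\tv(p,q)=d_\ko(p,q)$ is exactly the first assertion of \eqref{eq:ko,tv} in Proposition~\ref{prop:}(i), which holds for all $0<p<q\le\infty$ and hence in particular here.

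Next I would establish the middle inequality $d_\ko(p,q)<B(a,\tx_a)$ by distinguishing two cases. If $q=\infty$, then $d_\ko(p,\infty)=d_\ko(p)$ by definition, and Theorem~\ref{th:bound} applies since $p\ge4>\tfrac{50}{29}$, giving $d_\ko(p)<B(a,\tx_a)$. If instead $q<\infty$, then $0<p\le q<\infty$, so \eqref{eq:d(p,q)<d(p)} (part of Proposition~\ref{prop:}(ii)) yields $d_\ko(p,q)<d_\ko(p)$, and Theorem~\ref{th:bound} again gives $d_\ko(p)<B(a,\tx_a)$; composing these two bounds gives $d_\ko(p,q)<B(a,\tx_a)$ in this case as well.

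Finally, the last inequality $B(a,\tx_a)<C/p$ is precisely \eqref{eq:B<C/p}, i.e.\ the content of Lemma~\ref{lem:B<C/p}, valid for all $p\ge4$. Chaining the three displays yields $\tfrac12 d_\tv(p,q)=d_\ko(p,q)<B(a,\tx_a)<C/p$ for all $4\le p<q\le\infty$, which is the claim. Since every ingredient is already available, there is no genuine obstacle here; the one point requiring a little care is not to overlook the case $q=\infty$, in which \eqref{eq:d(p,q)<d(p)} does not apply and one must invoke Theorem~\ref{th:bound} directly for $d_\ko(p)=d_\ko(p,\infty)$.
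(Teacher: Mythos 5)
Your proof is correct and follows exactly the paper's route: the corollary is obtained by combining \eqref{eq:ko,tv}, \eqref{eq:d(p,q)<d(p)}, Theorem~\ref{th:bound}, and \eqref{eq:B<C/p} (Lemma~\ref{lem:B<C/p}). Your explicit separation of the case $q=\infty$ is a sensible bit of care, but it is the same argument the paper intends.
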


Graphs of the bounds $B(a,\tx_a)$ 
and $C/p$ 
are shown in Figure~\ref{fig:bounds-compare}, along with the corresponding graph of $d_\ko(p)$. 
This is done for the values of $p\in[\frac{50}{29},30]$, even though the upper bound $C/p$ on $d_\ko(p)$ has been established only for $p\ge4$. 

\begin{figure}[H] 
	\centering
\includegraphics[width=0.70\textwidth]{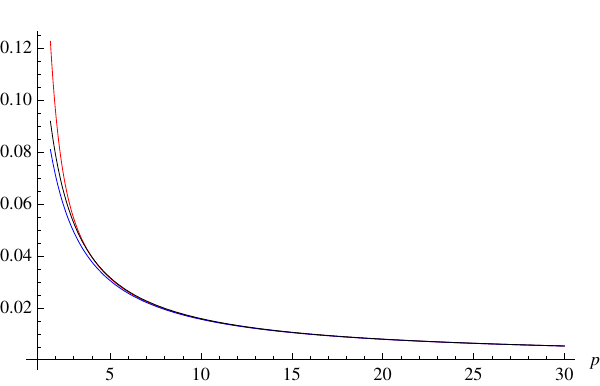
}
	\caption{Bounds $B(a,\tx_a)$ (red) and $C/p$ (blue), compared with $d_\ko(p)$ (black).} 
	\label{fig:bounds-compare}
\end{figure}

The relative errors $\frac{B(a,\tx_a)}{d_\ko(p)}-1$ and $\frac{C/p}{d_\ko(p)}-1$ of the bounds in \eqref {eq:<B} and \eqref {eq:<C/p} are shown in Figure~\ref{fig:rel-errs}, for $p\in[1,3.95]$ in the leftmost panel, for $p\in[3.95,4.05]$ in the middle panel, and for $p\in[4.05,30]$ in the rightmost one. 

\begin{figure}[H] 
	\centering		
	\includegraphics[width=1.00\textwidth]{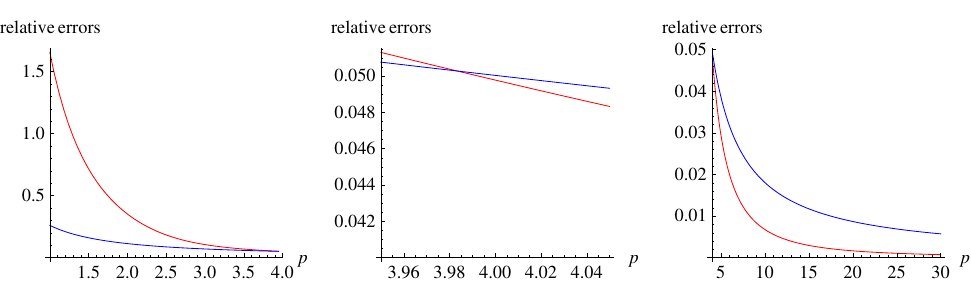
	}
	\caption{Relative errors $\frac{B(a,\tx_a)}{d_\ko(p)}-1$ (red) and $\frac{C/p}{d_\ko(p)}-1$ (blue) of the bounds in \eqref {eq:<B} and \eqref {eq:<C/p}.} 
	\label{fig:rel-errs}
\end{figure}

It appears that the bound $C/p$ would be more accurate than $B(a,\tx_a)$ for $p\in[1,3.98]$; remember, however, that the bound $B(a,\tx_a)$ was established only for $p\ge\frac{50}{29}$, from which the bound $C/p$ was deduced only for $p\ge4$. 
Anyway, the smaller values of $p>0$ may be of lesser interest, since for such $p$ the Student distribution is not very close to the standard normal one. 
On the other hand, for large $p$ the bound $B(a,\tx_a)$ appears significantly more accurate (in terms of the relative errors) --- but much more complicated --- than the bound $C/p$. 
Yet, even for $p$ as small as $4$, the relative errors of the bounds $C/p$ and $B(a,\tx_a)$ 
are both only about $5\%$, with the corresponding absolute errors less than $2\times10^{-3}$. 
For $p=12$, the relative and absolute errors of the bound $C/p$ are less than $1.5\%$ and $2\times10^{-4}$, 
respectively, and the corresponding figures for the bound $B(a,\tx_a)$ are about $0.5\%$ and $6\times10^{-5}$. 
%
%
Also, by \eqref{eq:lim=C}, the relative error $\frac{C/p}{d_\ko(p)}-1$ of the upper bound $C/p$ goes to $0$ as $p\to\infty$; in view of \eqref{eq:B<C/p}, the same holds for the upper bound $B(a,\tx_a)$. 
One may as well note that, if the distance $d_\ko(p)$ is considered as a kind of ``initial'' error --- of the approximation of the Student distribution by the SND, then the relative error $\frac{C/p}{d_\ko(p)}-1$ is a  relative error ``of the second order'', in the sense that it is the relative error of the estimate $C/p$ of the initial error $d_\ko(p)$; the same statement holds with $B(a,\tx_a)$ in place of $C/p$. 

Figure~\ref{fig:rel-errs} also suggests that the threshold value $4$ in the condition $p\ge4$ 
in Theorem~\ref{th:1/4} 
is very close to the best possible one for which the comparison \eqref{eq:B<C/p} between the bounds in \eqref{eq:<C/p} and \eqref{eq:<B} is still valid. 
%



\newcommand{\cpu}{\operatorname{CPU}}

The bounds in \eqref{eq:<C/p} and \eqref{eq:<B} may be compared with those obtained by 
Cacoullos, Papathanasiou and Utev \cite[Example~1, page~1614]{utev-cacoullos}, who used Stein-type methods to show that 
\begin{equation}
\tfrac12\,d_\tv(p)\le B_{p,\cpu}:=\tfrac4{p-2} 	
\end{equation}
for $p>2$. Figure~\ref{fig:CPU} suggests that the bounds in \eqref{eq:<C/p} and \eqref{eq:<B} are much smaller than $B_{p,\cpu}$.

\begin{figure}[H] 
	\centering
\includegraphics[width=0.70\textwidth]{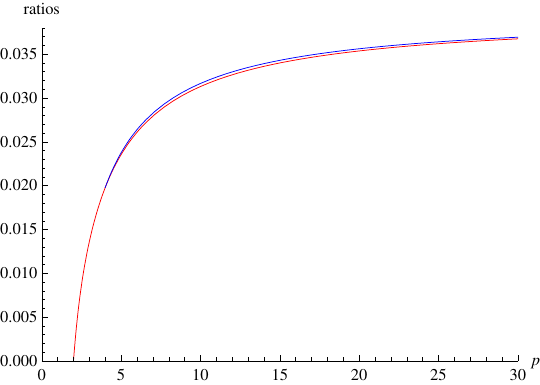}
	\caption{Ratios of the bounds $B(a,\tx_a)$ (red) and $C/p$ (blue) to $B_{p,\cpu}$.} 
	\label{fig:CPU}
\end{figure}
 
It was also shown in \cite{utev-cacoullos} that the total variation distance between $\sd$ and the centered normal distribution $N\big(0,p/(p-2)\big)$ with variance $p/(p-2)$ is no greater than $4/(p-1)$, again for $p>2$. 
For $p>4$ and the Kolmogorov distance between $\sd$ and $N\big(0,p/(p-2)\big)$ -- which is of course half the corresponding total variation distance, Shimizu \cite[(4.5)]{shimizu} obtained an upper bound, which is no less than (and asymptotic to, for $p\to\infty$) $C_1/(p-p_0)$, where $C_1:=1/\pi=0.31831\dots$ and $p_0:=(54 + 8 \sqrt2)/17=3.841\ldots$. 
As Table~1 in \cite{shimizu} suggests, these bounds in \cite{shimizu,utev-cacoullos} concerning the closeness of $\sd$ to $N\big(0,p/(p-2)\big)$ are not asymptotically optimal for large $p$, in contrast with the bounds in \eqref{eq:<C/p} and \eqref{eq:<B}. It appears likely that methods similar to the ones used in this paper can yield bounds with the asymptotically best possible constant factors for $N\big(0,p/(p-2)\big)$ as well.  
At this point, one may also note that, according to \cite{normal-scale}, the total variation distance between $N\big(0,p/(p-2)\big)$ and the SND is less than $C_2/\sqrt{p(p-2)}\sim C_2/p$, where $C_2:=\sqrt{2/(\pi e)}=0.48\dots$. 
The bounds in each of the papers \cite{shimizu,utev-cacoullos} were obtained by quite different methods and as corollaries of more general results. 

Also, upper bounds  
of the form $c/\sqrt n$ 
on the Kolmogorov and total variation distances between the distribution of the self-normalised sum (say $V_n$) of i.i.d.\ standard normal r.v.'s $Z_1,\dots,Z_n$ and the standard normal distribution were recently 
obtained in \cite{bourguin} by means of the Malliavin calculus, where $c$ is a positive absolute constant. 
However, the optimal bounds in this special ``i.i.d.\ standard normal'' case should be $O(1/n)$; indeed, 
in view of Theorem~\ref{th:1/4} above and \cite[Proposition~1.4]{BE-student}, 
$|\P(V_n\le z)-\Phi(z)|\le\frac{0.322}{n-1}$ for $n=2,3,\dots$ and all real $z$. 

In \cite{pink-wilk}, an asymptotic expansion for the tail $1-F_p(x)$ of the $\sd$ was obtained, which provides  successive approximations (say $A_{p,j}(x)$) that are good for very large values of $x$, as illustrated in Figure~\ref{fig:wilk} --- for $p=14$.  
\begin{figure}[htbp]
	\centering	\includegraphics[width=1.00\textwidth]{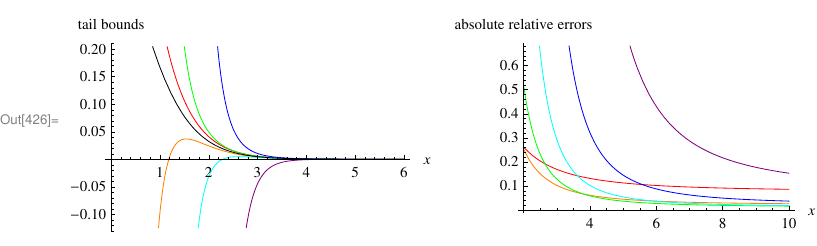
	}
	\caption{Left panel: the successive approximations $A_{14,1}(x),\dots,A_{14,6}(x)$ of $1-F_{14}(x)$ as in \cite{pink-wilk}, colored red, orange, green, cyan, blue, purple, respectively; the graph of the tail $1-F_{14}(x)$ is black. 
	Right panel: the graphs of the corresponding absolute relative errors $|\frac{A_{14,j}(x)}{1-F_{14}(x)}-1|$ ($j=1,\dots,6$).} 
	\label{fig:wilk}
\end{figure}
The right panel of Figure~\ref{fig:wilk} suggests that each approximation $A_{p,j}(x)$ has its own ``maximum competency'' zone of values of $x$, for which it is the best, over all $j$'s; 
it appears that 
this zone is a neighborhood of $\infty$, which gets narrower as $j$ increases. 
Clearly, the bounds given in the present paper differ quite significantly in kind and purpose from those given in \cite{pink-wilk}. 
%

\section{Proofs}\label{proofs} 

The main idea of the proof of the inequality in \eqref{eq:<C/p} is to reduce it, through a number of steps, to systems of algebraic inequalities. 
Such systems, 
by a well-known result of Tarski \cite{tarski48,collins98} (rooted in Sturm's
theorem), can be solved in a completely algorithmic manner. 
Similar results hold for certain other systems which may also involve the logarithmic function (whose derivative is algebraic), the SND density function $\vpi$ (whose logarithm is algebraic), and the SND distribution function $\Phi$ (whose derivative is $\vpi$). 
The bound $B(a,\tx_a)$ in Theorem~\ref{th:bound} is such an expression. 
The Tarski algorithm is implemented in latter versions of Mathematica via \texttt{Reduce} and other related commands. 
For instance, a command of the form 
$$
\text{
\texttt{Reduce[cond1 \&\& cond2 \&\& $\dots$, \{var1,var2,$\dots$,\}, Reals]}
}
$$
returns a simplified equivalent of the given system (of equations and/or inequalities) \texttt{cond1}, \texttt{cond2}, $\dots$ over real variables \texttt{var1}, \texttt{var2}, $\dots$.
However, the execution of such a command may take a very long time (and/or require too much computer memory) if the given system is more than a little complicated, as is e.g.\ the case with the system $B(a,\tx_a)<C/p\ \&\ a=1/p\ \&\ p\ge4$, which provides the way to deduce the bound in \eqref{eq:<C/p} from that in \eqref{eq:<B}. 
Therefore, Mathematica will need some human guidance here. 
It appears that all such 
calculations done with the help of a computer are, at least, as reliable and rigorous as the same calculations done only by hand. 

The main difficulty to overcome in this paper was to construct the upper bound $B(a,\tx_a)$ on $d_\ko(p)$, which would be, on the one hand, accurate enough and, on the other hand, provide a traversable bridge from $d_\ko(p)$ to the simple upper bound $C/p$, as indicated above. 
In turn, the bound $B(a,\tx_a)$ was obtained in several steps, described in detail in the statements of Lemmas~\ref{lem:x_p}--\ref{lem:xxa}, presented in Subsection~\ref{lemmas}. 

The first step is to note that the difference $f_\infty(x)-f_p(x)$ between the densities of the SND and $\sd$ changes its sign exactly once, from $+$ to $-$, as $x$ increases from $0$ to $\infty$ (Lemma~\ref{lem:x_p}). 
A key observation here (essentially borrowed from \cite{mono-student}) is that, luckily, for the defined in \eqref{eq:r_p} ratio $r_p(x)$ of the densities, the logarithmic partial derivarive $\pd{}p\ln r_p(x)$ 
increases in $x\in[0,1]$ and decreases in $x\in[1,\infty)$ --- 
with the same switch-point $1$ for all $p>0$. 
This implies that $r_p(x)$ decreases in $x\in[0,1]$ from $r_p(0)<1$ and then increases in $x\in[1,\infty)$ to $\infty$, so that  
the difference $F_\infty-F_p$ between the SND and $\sd$ distribution functions switches its monotonicity pattern just once --- from increase to decrease --- on the interval $[0,\infty)$, which provides a more manageable expression for the Kolmogorov distance $d_\ko(p)$. 

The next step concerns the difficulty that the expression \eqref{eq:f_p} for $f_p(x)$ contains the so-called Wallis ratio $\Ga(\frac{p+1}{2})/\Ga(\frac{p}{2})$, which is not algebraic, and whose logarithm or derivative or logarithmic derivative is not algebraic either. 
To deal with this problem, we have just developed in \cite{wallis-ratio} series of high-precision upper and lower algebraic bounds on the Wallis ratio; for the purposes of the present paper, the first upper bound and the second lower bound in the corresponding series in \cite{wallis-ratio} already suffice (Lemma~\ref{lem:wallis}). \big(A recent paper \cite{mortici} provided other new upper and lower bounds on the Wallis ratio, improving on a number of preceding results. The series of bounds given in \cite{wallis-ratio} (except a few first members of those series) are tighter than all the bounds in \cite{mortici}.\big) 
By using the mentioned results of \cite{wallis-ratio}, we obtain an upper bound, written as $H(a,x)/\sqrt{2\pi}$, on the difference $f_\infty(x)-f_p(x)$ between the densities of the SND and $\sd$, which has an algebraic expression in place of the Wallis ratio (Lemma~\ref{lem:H}). 

According to \eqref{eq:d=}, $d_\ko(p)$ equals a definite integral (in $x$) of the difference $f_\infty(x)-f_p(x)$; so, this integral can be bounded from above by the corresponding integral of the just mentioned upper bound $H(a,x)/\sqrt{2\pi}$. 
However, the latter integral is still problematic to estimate accurately enough. Toward that end, by some tweaking of the third-order Taylor polynomial in $a$ for $H(a,x)$ near $a=0$, we construct an upper bound $\tH_2(a,x)$ on $H(a,x)$, which is just the product of $\vpi(x)$ and a polynomial in $a,x$  (Lemma~\ref{lem:H<tH}). 
Thus, the bound $\tH_2(a,x)$ has certain nice properties (Lemma~\ref{lem:xxa}). Also, the relevant definite integral of $\tH_2(a,x)$ (corresponding to the mentioned one of $H(a,x)$) can be easily expressed in terms of the functions $\vpi$ and $\Phi$, thus finally resulting in the bound $B(a,\tx_a)$ in \eqref{eq:<B}. 

Inequality \eqref{eq:B<C/p} (which, together with \eqref{eq:<B}, yields the inequality in \eqref{eq:<C/p})  
is provided by Lemma~\ref{lem:B<C/p}, 
whose proof is rather technical and relies on the Mathematica command  \verb!Reduce!, as described above. 
As for Proposition~\ref{prop:}, it follows easily from Lemma~\ref{lem:x_p} and the result of \cite{mono-student}.  

It appears that essentially the same method can be used to obtain even tighter upper (as well as lower) bounds on the distances $d_\ko(p)$ and $d_\tv(p)$; toward such an end, one could use bounds in \cite{wallis-ratio} on the Wallis ratio of higher orders of accuracy, as well as tweaked-Taylor polynomials for $H(a,x)$ of higher orders. The limitations on the attainable accuracy of such bounds on $d_\ko(p)$ appear to be mainly set by the existing computational power; also, the proofs of the yet tighter bounds can be expected to be even more complicated. 

In accordance with the above description of the scheme of proof, the current section is organized as follows. In Subsection~\ref{lemmas}, the mentioned lemmas are stated, thus presenting most of the main steps of proof. Next, in the same subsection, Proposition~\ref{prop:} and Theorems~\ref{th:bound} and \ref{th:1/4} are proved based on these lemmas. 
Finally, in Subsection~\ref{proofs-lemmas} the lemmas stated in Subsection~\ref{lemmas} (and requiring proof) are proved. 

\subsection{Statements of lemmas, and proofs of the main results}\label{lemmas}
Introduce 
\begin{equation}\label{eq:r_p}
	r_{p,q}(x):=\frac{f_p(x)}{f_q(x)}\quad\text{and}\quad 
	r_p(x):=r_{p,\infty}(x)=\frac{f_p(x)}{\vpi(x)}. 
\end{equation}

\begin{lemma}\label{lem:x_p} 
For each pair $(p,q)$ such that $0<p<q\le\infty$
\begin{enumerate}[(i)]
	\item 
	the ratio $r_{p,q}(x)$ 
decreases in $x\in[0,1]$ from $r_{p,q}(0)<1$, and then increases in $x\in[1,\infty)$ to $\infty$; therefore, 
\item there is a unique point $x_{p,q}\in(0,\infty)$ (which is in fact greater than $1$) such that 
\begin{equation}\label{eq:x_p}
	\begin{aligned}
	&\text{$f_p(x)<f_q(x)$ for all $x\in[0,x_{p,q})$,} \\ 
	&\text{$f_p(x_{p,q})=f_q(x_{p,q})$,} \\
	&\text{$f_p(x)>f_q(x)$ for all $x\in(x_{p,q},\infty)$,} 	
	\end{aligned}
\end{equation}
and hence 
\begin{equation}\label{eq:d=}
	d_\ko(p,q)=F_q(x_{p,q})-F_p(x_{p,q}). 
\end{equation}
\end{enumerate}
\end{lemma}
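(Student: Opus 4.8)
The plan is to obtain part~(i) by differentiating $\ln r_{p,q}$ once in $x$, and then to deduce part~(ii) and formula~\eqref{eq:d=} essentially for free from that.

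First I would use \eqref{eq:f_p} and \eqref{eq:r_p} to write
\[
\ln r_{p,q}(x)=c(p,q)-\tfrac{p+1}2\ln\!\Big(1+\tfrac{x^2}p\Big)+\tfrac{q+1}2\ln\!\Big(1+\tfrac{x^2}q\Big),
\]
where $c(p,q)$ is free of $x$ and the last term is read as $\tfrac{x^2}2$ when $q=\infty$. Differentiating in $x$ and simplifying yields the clean identity
\[
\pd{}x\ln r_{p,q}(x)=\frac{(q-p)\,x\,(x^2-1)}{(p+x^2)(q+x^2)}\qquad\Big(=\frac{x\,(x^2-1)}{p+x^2}\ \text{when}\ q=\infty\Big),
\]
which is the quantitative version of the observation of \cite{mono-student}: indeed $\pd{}s\pd{}x\ln r_s(x)=-x(x^2-1)/(s+x^2)^2$ is $\ge0$ for $x\in[0,1]$ and $\le0$ for $x\in[1,\infty)$ uniformly in $s>0$ (so $\pd{}s\ln r_s(x)$ increases in $x$ on $[0,1]$ and decreases on $[1,\infty)$, with switch-point $1$), and differentiating $\ln r_{p,q}(x)=-\int_p^q\pd{}s\ln r_s(x)\dd s$ in $x$ recovers the displayed identity. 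Its right-hand side is $<0$ for $x\in(0,1)$, vanishes at $x=1$, and is $>0$ for $x\in(1,\infty)$; hence $r_{p,q}$ strictly decreases on $[0,1]$ and strictly increases on $[1,\infty)$.

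It then remains, for part~(i), to settle the two endpoints. For $r_{p,q}(x)\to\infty$ as $x\to\infty$ I would just compare tails: as $x\to\infty$, $r_{p,q}(x)$ equals a positive constant times $x^{q-p}(1+o(1))$ when $q<\infty$, and a positive constant times $x^{-(p+1)}e^{x^2/2}(1+o(1))$ when $q=\infty$. For $r_{p,q}(0)<1$, i.e.\ $f_p(0)<f_q(0)$, I would invoke strict monotonicity of $p\mapsto f_p(0)=\Gamma(\tfrac{p+1}2)\big/\big(\sqrt{\pi p}\,\Gamma(\tfrac p2)\big)$ on $(0,\infty]$ (with limit $\vpi(0)$), one of the monotonicity properties recorded in \cite{mono-student}; alternatively one checks it directly from $\tfrac{d}{dp}\ln f_p(0)=\tfrac12\big(\psi(\tfrac{p+1}2)-\psi(\tfrac p2)-\tfrac1p\big)$ together with the elementary inequality $\psi(t+\tfrac12)-\psi(t)>\tfrac1{2t}$, which follows by termwise comparison in the digamma series, $\psi(t+\tfrac12)-\psi(t)=\sum_{n\ge0}\tfrac{1/2}{(n+t)(n+t+1/2)}>\sum_{n\ge0}\tfrac{1/2}{(n+t)(n+t+1)}=\tfrac1{2t}$. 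This monotonicity of $f_p(0)$ --- equivalently, the Wallis-type inequality just displayed --- is the only ingredient beyond plain bookkeeping, so I expect it to be the main (if mild) obstacle.

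Finally, for part~(ii): since $r_{p,q}$ decreases on $[0,1]$ starting below $1$, it stays $<1$ throughout $[0,1]$; on $[1,\infty)$ it increases continuously from $r_{p,q}(1)<1$ to $\infty$, so there is a unique $x_{p,q}$, necessarily in $(1,\infty)$, with $r_{p,q}(x_{p,q})=1$, $r_{p,q}<1$ on $[0,x_{p,q})$, and $r_{p,q}>1$ on $(x_{p,q},\infty)$ --- which is \eqref{eq:x_p}. For \eqref{eq:d=} I would put $G:=F_q-F_p$ and use evenness of $f_p,f_q$ (so $F_p(-x)=1-F_p(x)$ and $F_q(-x)=1-F_q(x)$), which makes $G$ odd with $G(0)=0=G(\pm\infty)$; by \eqref{eq:x_p} (and evenness on the negative half-line) $G'=f_q-f_p>0$ on $(-x_{p,q},x_{p,q})$ and $G'<0$ on $(-\infty,-x_{p,q})\cup(x_{p,q},\infty)$, so $G$ attains its global maximum over $\R$ at $x_{p,q}$ and, by oddness, its global minimum $-G(x_{p,q})$ at $-x_{p,q}$; hence $d_\ko(p,q)=\sup_{x\in\R}|G(x)|=G(x_{p,q})=F_q(x_{p,q})-F_p(x_{p,q})$.
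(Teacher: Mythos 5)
Your proof is correct, and it follows the same skeleton as the paper's: establish that $r_{p,q}$ decreases on $[0,1]$ from $r_{p,q}(0)<1$ and increases on $[1,\infty)$ to $\infty$, locate the unique crossing point $x_{p,q}>1$, and then read off \eqref{eq:d=} from the resulting unimodality of $F_q-F_p$ on $[0,\infty)$ together with symmetry. The one substantive difference is that the paper imports both key ingredients --- the decrease/increase pattern of $r_{p,q}$ with switch-point $1$, and the strict increase of $f_p(0)$ in $p$ --- as black boxes from \cite{mono-student}, whereas you verify them directly: the explicit identity $\pd{}x\ln r_{p,q}(x)=(q-p)\,x(x^2-1)\big/\big((p+x^2)(q+x^2)\big)$ (which checks out, including the $q=\infty$ limit) and the digamma inequality $\psi(t+\tfrac12)-\psi(t)>\tfrac1{2t}$ obtained by termwise comparison of the telescoping series. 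This makes your argument self-contained and arguably more transparent (the sign pattern of $r_{p,q}'$ is immediate from the rational formula), at the cost of a little extra computation; the paper's version is shorter but only because it leans on the cited lemma. All remaining steps (the tail asymptotics giving $r_{p,q}(x)\to\infty$, and the oddness argument identifying $\sup_x|F_q-F_p|$ with $F_q(x_{p,q})-F_p(x_{p,q})$) match the paper's reasoning.
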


For brevity, let 
\begin{equation}\label{eq:x_p:=}
	x_p:=x_{p,\infty}. 
\end{equation}

\begin{lemma}\label{lem:wallis}
For all real $p>0$ 
\begin{equation}\label{eq:wallis}
	L_2(a)<r_p(0)<U_1(a), 
\end{equation}
where 
\begin{equation*}
	L_2(a):=\frac{(1 + 2 a)^{1/2}}{(1 + a)^{7/8} (1 + 3 a)^{1/8}} \quad\text{and}\quad 
	U_1(a):=\frac1{(1 + a)^{1/4}}. 
\end{equation*}
\end{lemma}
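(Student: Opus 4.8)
\emph{Proof sketch.} The plan is to reduce Lemma~\ref{lem:wallis} to the algebraic two-sided bounds on the Wallis ratio obtained in \cite{wallis-ratio}. Evaluating \eqref{eq:f_p} at $x=0$ gives $f_p(0)=\frac{\Ga(\tfrac{p+1}2)}{\sqrt{\pi p}\,\Ga(\tfrac p2)}$, while $\vpi(0)=\frac1{\sqrt{2\pi}}$; hence, since $a=1/p$,
\begin{equation*}
	r_p(0)=\frac{f_p(0)}{\vpi(0)}=\sqrt{2/p}\;\frac{\Ga(\tfrac{p+1}2)}{\Ga(\tfrac p2)}=\sqrt{2a}\;\frac{\Ga(\tfrac{p+1}2)}{\Ga(\tfrac p2)}.
\end{equation*}
Thus the asserted inequalities \eqref{eq:wallis} are equivalent to $L_2(a)/\sqrt{2a}<\Ga(\tfrac{p+1}2)/\Ga(\tfrac p2)<U_1(a)/\sqrt{2a}$, i.e.\ to a pair of lower and upper algebraic bounds on the Wallis-type ratio $\Ga(\tfrac{p+1}2)/\Ga(\tfrac p2)$.

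I would then simply invoke \cite{wallis-ratio}, which supplies a decreasing sequence of algebraic upper bounds and an increasing sequence of algebraic lower bounds on exactly this ratio; keeping the first member of the former sequence and the second member of the latter, and rewriting them in terms of $a=1/p$ (collecting the powers of $1+a$, $1+2a$, $1+3a$), one recovers precisely $U_1(a)/\sqrt{2a}$ and $L_2(a)/\sqrt{2a}$, and \eqref{eq:wallis} follows. Should a self-contained argument be preferred, each of the two bounds can instead be proved directly: raising both sides to a power $N$ that clears the exponents $\tfrac18,\tfrac78,\tfrac14$ (so $N=8$ works), one forms the quotient of $[\Ga(\tfrac{p+1}2)/\Ga(\tfrac p2)]^N$ by the $N$-th power of the candidate bound and shows this quotient is monotone in $p>0$; the recursion $\Ga(\tfrac{p+3}2)/\Ga(\tfrac{p+2}2)=\tfrac{p+1}p\,\Ga(\tfrac{p+1}2)/\Ga(\tfrac p2)$ reduces monotonicity along each progression $p,p+2,p+4,\dots$ to an elementary rational inequality, while the limiting value of the quotient (which must be $1$, the bounds being asymptotically sharp) is read off from $\Ga(\tfrac{p+1}2)/\Ga(\tfrac p2)\sim\sqrt{p/2}$ as $p\to\infty$.

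Because the real content of the two estimates lives in \cite{wallis-ratio}, the main obstacle here is not conceptual but clerical: aligning the parametrization and normalization of \cite{wallis-ratio} with the present ones, and carrying the fractional exponents through the simplification without error. In the self-contained variant, the only genuinely delicate point is interpolating the monotonicity between consecutive arithmetic progressions of step $2$ (equivalently, using log-convexity of $\ln\Ga$); the step-$2$ monotonicity itself is routine.
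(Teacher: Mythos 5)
Your proposal is correct and follows essentially the same route as the paper, which likewise proves this lemma simply by identifying $r_p(0)=\sqrt{2a}\,\Gamma(\tfrac{p+1}{2})/\Gamma(\tfrac{p}{2})$ with the (normalized) Wallis ratio $r(p)$ of \cite{wallis-ratio} and invoking the first upper bound and second lower bound from the series established there. The additional self-contained sketch is a reasonable fallback but is not needed, and is not part of the paper's argument.
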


This follows 
by the main result in \cite{wallis-ratio}; the notations $r_p(0)$, $L_k(a)$, and $U_k(a)$ in the above  Lemma~\ref{lem:wallis} correspond to $r(p)$, $L_k(p)$, and $U_k(p)$ in \cite{wallis-ratio}. 

The first inequality in \eqref{eq:wallis}, together with the definition \eqref{eq:f_p},  immediately yields 

\begin{lemma}\label{lem:H}
For all real $p>0$ and all $x\in\R$ 
\begin{equation*}
	f_\infty(x)-f_p(x)<\frac{H(a,x)}{\sqrt{2\pi}},
\end{equation*}
where
\begin{equation*}
	H(a,x):=e^{-x^2/2}-L_2(a)(1+ax^2)^{-\frac{1+a}{2a}}. 
\end{equation*}
\end{lemma}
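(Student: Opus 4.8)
The plan is to recognize that Lemma~\ref{lem:H} is just the lower Wallis-ratio bound of Lemma~\ref{lem:wallis} rewritten in terms of densities. First I would put $f_p$ into the shape that appears in $H$. Evaluating \eqref{eq:f_p} at $x=0$ gives $f_p(0)=\Ga(\tfrac{p+1}2)/(\sqrt{\pi p}\,\Ga(\tfrac p2))$, and since $\vpi(0)=1/\sqrt{2\pi}$ we get $r_p(0)=\sqrt{2\pi}\,f_p(0)=\sqrt{2/p}\,\Ga(\tfrac{p+1}2)/\Ga(\tfrac p2)$, whence $\Ga(\tfrac{p+1}2)/(\sqrt{\pi p}\,\Ga(\tfrac p2))=r_p(0)/\sqrt{2\pi}$. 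Substituting this back into \eqref{eq:f_p} and using $a=1/p$ (so that $x^2/p=ax^2$ and $(p+1)/2=(1+a)/(2a)$) yields
\begin{equation*}
	f_p(x)=\frac{r_p(0)}{\sqrt{2\pi}}\,(1+ax^2)^{-(1+a)/(2a)}
\end{equation*}
for all real $x$. Since $f_\infty(x)=\vpi(x)=e^{-x^2/2}/\sqrt{2\pi}$, this gives
\begin{equation*}
	f_\infty(x)-f_p(x)=\frac1{\sqrt{2\pi}}\Big(e^{-x^2/2}-r_p(0)\,(1+ax^2)^{-(1+a)/(2a)}\Big).
\end{equation*}

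Next I would invoke the first inequality in \eqref{eq:wallis}, i.e.\ $L_2(a)<r_p(0)$ for all $p>0$, together with the trivial fact that $(1+ax^2)^{-(1+a)/(2a)}>0$ (the exponent $-(1+a)/(2a)$ is negative and the base exceeds $1$). Multiplying the positive quantity $(1+ax^2)^{-(1+a)/(2a)}$ by $L_2(a)<r_p(0)$ and subtracting from $e^{-x^2/2}$ gives
\begin{equation*}
	f_\infty(x)-f_p(x)<\frac1{\sqrt{2\pi}}\Big(e^{-x^2/2}-L_2(a)\,(1+ax^2)^{-(1+a)/(2a)}\Big)=\frac{H(a,x)}{\sqrt{2\pi}},
\end{equation*}
which is the assertion.

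There is no genuine obstacle here: the only nontrivial input — the estimate $L_2(a)<r_p(0)$ valid for every $p>0$ — is exactly Lemma~\ref{lem:wallis}, itself quoted from the main result of \cite{wallis-ratio}, so what remains is the elementary algebraic rewriting of $f_p$ above plus a one-line monotonicity step. The only point requiring a little care is the bookkeeping with the substitution $a=1/p$ in the exponent, namely checking the identity $(p+1)/2=(1+a)/(2a)$ and the sign of that exponent, which is what guarantees the factor $(1+ax^2)^{-(1+a)/(2a)}$ is strictly positive for all real $x$ and hence that replacing $r_p(0)$ by the smaller $L_2(a)$ only increases $e^{-x^2/2}-r_p(0)(1+ax^2)^{-(1+a)/(2a)}$.
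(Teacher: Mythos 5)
Your proof is correct and is exactly the argument the paper intends: the paper derives Lemma~\ref{lem:H} immediately from the first inequality in \eqref{eq:wallis} together with the rewriting $f_p(x)=\frac{r_p(0)}{\sqrt{2\pi}}(1+ax^2)^{-(1+a)/(2a)}$, which is precisely your computation. The only (inconsequential) nitpick is that at $x=0$ the base $1+ax^2$ equals $1$ rather than exceeding it, but the factor $(1+ax^2)^{-(1+a)/(2a)}$ is strictly positive in all cases, which is all you need.
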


By some tweaking of the third-order Taylor polynomial in $a$ for $H(a,x)$ near $a=0$, one obtains 
\begin{equation}\label{eq:tH2}
	\tH_2(a,x):=\frac{aP(a,x)}{384} e^{-x^2/2},
\end{equation}
where $P(a,x)$ is as in \eqref{eq:P},  
so that $\tH_2(a,x)$ be an upper bound on $H(a,x)$: 

\begin{lemma}\label{lem:H<tH}
For all $a\in(0,\frac{29}{50}]$ and $x\in(0,\frac{123}{50})$ 
\begin{equation*}
	H(a,x)<\tH_2(a,x). 
\end{equation*}
\end{lemma}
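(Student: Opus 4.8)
The plan is to verify the inequality $H(a,x)<\tH_2(a,x)$ on the box $a\in(0,\tfrac{29}{50}]$, $x\in(0,\tfrac{123}{50})$ by a substitution that removes the non‑algebraic ingredients, reducing everything to a polynomial sign question that can be dispatched by Tarski's algorithm (the Mathematica \verb!Reduce! command). Recall that, by \eqref{eq:tH2}, $\tH_2(a,x)-H(a,x)=L_2(a)(1+ax^2)^{-(1+a)/(2a)}-e^{-x^2/2}\bigl(1-\tfrac{aP(a,x)}{384}\bigr)$, so the claim is equivalent to
\begin{equation*}
	L_2(a)\,(1+ax^2)^{-(1+a)/(2a)} > e^{-x^2/2}\Bigl(1-\tfrac{aP(a,x)}{384}\Bigr).
\end{equation*}
First I would check that the right‑hand side is positive on the box (otherwise the inequality is trivial there): on $x\in(0,\tfrac{123}{50})$ and $a\le\tfrac{29}{50}$ one has $|aP(a,x)|/384$ bounded well below $1$, which is itself a polynomial sign check. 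On the region where both sides are positive, take logarithms; the inequality becomes
\begin{equation*}
	\ln L_2(a)-\tfrac{1+a}{2a}\ln(1+ax^2)+\tfrac{x^2}2>\ln\Bigl(1-\tfrac{aP(a,x)}{384}\Bigr).
\end{equation*}

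The key step is to linearise the two remaining logarithms of non‑algebraic arguments, namely $\ln(1+ax^2)$ and $\ln\bigl(1-\tfrac{aP(a,x)}{384}\bigr)$, by the elementary two‑sided bound $\tfrac{u}{1+u}\le\ln(1+u)\le u$ valid for $u>-1$; I would use $\ln(1+ax^2)\ge \tfrac{ax^2}{1+ax^2}$ on the left (since it enters with a negative coefficient $-\tfrac{1+a}{2a}<0$, a lower bound on this log gives an upper bound on the term, hence we actually need the reverse — so in fact use $\ln(1+ax^2)\le ax^2$ to lower‑bound $-\tfrac{1+a}{2a}\ln(1+ax^2)$, wait: $-\tfrac{1+a}{2a}\cdot(\text{something }\le ax^2)\ge -\tfrac{1+a}{2a}ax^2$), and on the right use $\ln\bigl(1-v\bigr)\le -v$ with $v=\tfrac{aP(a,x)}{384}$ when $v\ge0$, and $\ln(1-v)\le \tfrac{-v}{1-v}\cdot$(sign‑adjusted) otherwise; the term $\ln L_2(a)$ is itself $\tfrac12\ln(1+2a)-\tfrac78\ln(1+a)-\tfrac18\ln(1+3a)$, each of which I would pinch between a rational function lower and upper bound using the same $\ln(1+u)$ estimates, so that after clearing denominators the whole inequality becomes a polynomial inequality $Q(a,x)>0$ on the box. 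That polynomial inequality, being purely algebraic in $(a,x)$ over a semialgebraic box, is decided by \verb!Reduce!; the threshold constants $\tfrac{29}{50}$ and $\tfrac{123}{50}$ are presumably exactly what make the crude log bounds tight enough to close the gap. Since $\tH_2$ is, by construction, the tweaked degree‑three Taylor polynomial of $H$ in $a$, the difference $\tH_2-H$ vanishes to order $a^4$ at $a=0$, which is the structural reason the margin is comfortable for small $a$; the box endpoint $a=\tfrac{29}{50}$ is the place where the argument is tightest.

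The main obstacle I anticipate is the balancing act at the two corners of the box: near $a=0$ the inequality is governed by the $a^4$‑order discrepancy between $H$ and its tweaked Taylor polynomial and is easy, but near $a=\tfrac{29}{50}$ together with $x$ close to $\tfrac{123}{50}$ the crude $\ln(1+u)$ pinching may be too lossy, so one may need the sharper two‑term bounds $u-\tfrac{u^2}2\le\ln(1+u)\le u-\tfrac{u^2}2+\tfrac{u^3}3$ for the dominant log (the one multiplied by $\tfrac{1+a}{2a}$, which is large), or to split the box into two or three sub‑boxes and run \verb!Reduce! on each. A secondary nuisance is tracking signs carefully when a logarithm enters with a negative coefficient, so that one consistently bounds in the direction that preserves the desired inequality; this is bookkeeping rather than genuine difficulty. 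If even the sharper log bounds prove insufficient uniformly, the fallback is to isolate the handful of monomials in $Q(a,x)$ responsible for the near‑tightness and absorb them by a slightly finer rational approximation of $\ln L_2(a)$, keeping the final object polynomial so that the Tarski decision procedure still applies.
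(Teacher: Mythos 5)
Your strategy (take logarithms, pinch each logarithm between elementary rational bounds, and hand the resulting polynomial inequality to \texttt{Reduce}) is genuinely different from the paper's, and it has a real gap precisely at the corner you declare easy. Both $H(a,x)$ and $\tH_2(a,x)$ vanish at $a=0$ and agree through second order in $a$ (by construction, $\tH_2$ comes from the third-order Taylor polynomial of $H$ in $a$), so the margin $\tH_2(a,x)-H(a,x)$ you must certify is only of order $a^3$ as $a\downarrow0$. After taking logs, the dominant term is $\frac{1+a}{2a}\ln(1+ax^2)$; replacing $\ln(1+u)$ by $u$ there introduces an error of order $u^2/(2a)\sim ax^4/4$, i.e.\ $O(a)$, and even your sharper bound $u-u^2/2\le\ln(1+u)\le u-u^2/2+u^3/3$ still leaves an $O(a^2)$ error in that term. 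Either error swamps the $O(a^3)$ margin, so the polynomial inequality $Q(a,x)>0$ you would pass to \texttt{Reduce} is simply false in a neighborhood of $a=0$, and the command would (correctly) fail to certify it. The same objection applies to the crude bounds on $\ln L_2(a)$ and on $\ln\bigl(1-aP(a,x)/384\bigr)$: each approximation must be accurate to $O(a^4)$ for the scheme to have a chance. The hard regime is $a\to0$, not $a=\frac{29}{50}$, and the plan as written does not survive it.

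The paper avoids any approximation of the logarithms. It sets $\de(a,x)$ equal to the difference of the logs (after first checking by \texttt{Reduce} that $\tP(a,x):=384-aP(a,x)>0$ so that taking logs is legitimate), notes the degeneracy $\de(0+)=0$ and $(D\de)(0+)=0$ with $(D\de)(a)=4a^2\de'(a)$, and then exploits the fact that differentiating in $a$ kills the logarithms exactly: a suitably normalized second derivative $(DD\de)$ is an honest polynomial in $(a,x)$, whose strict negativity on the box is a clean Tarski/\texttt{Reduce} query; integrating back twice from $a=0+$ gives $\de<0$ and hence the lemma. If you want to salvage your route, you would need Pad\'e-type rational bounds on $\ln(1+ax^2)$ with error $O(a^5)$ (and $O(a^4)$ bounds on the other two logarithms), and you would still have to verify that the constants close on the whole box; that is conceivable but is not what you proposed, and your assessment of where the bottleneck lies would need to be reversed.
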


\begin{lemma}\label{lem:xxa}
For each $a\in(0,1)$, there is a unique real root $x>0$ of the polynomial equation 
\eqref{eq:P}, so that $\tx_a$ is correctly defined in the statement of Theorem~\ref{th:bound}. 
Moreover, 
\begin{equation*}
\begin{aligned}
	&\text{$\tH_2(a,x)>0$ for all $x\in(0,\tx_a)$,} \\
	&\text{$\tH_2(a,\tx_a)=0$,} \\
	&\text{$\tH_2(a,x)<0$ for all $x>\tx_a$.} \\
\end{aligned} 
\end{equation*}
Furthermore, $\tx_a$ is strictly and continuously increasing in $a\in(0,1)$. 
\end{lemma}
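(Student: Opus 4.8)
The plan is to analyze the polynomial $P(a,x)$ in \eqref{eq:P} as a polynomial in $y:=x^2$ and first establish the existence and uniqueness of the positive root $\tx_a$, together with the claimed sign pattern of $\tH_2(a,x)$. Writing $P(a,x) = \sum_{j=0}^4 c_j(a) y^j$ with $y = x^2$, one checks that the leading coefficient $c_4(a) = -96 - 12a < 0$ for $a\in(0,1)$, while the constant term $c_0(a) = 96 + 3a^2 - 12a + \dots$ — more precisely $P(a,0) = -96(-1) - 4a\cdot 3 - a^2(-33) = 96 - 12a + 33a^2 > 0$ for $a\in(0,1)$ (the discriminant of $33a^2 - 12a + 96$ in $a$ is negative). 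Since $P(a,0)>0$ and $P(a,x)\to-\infty$ as $x\to\infty$ (for fixed $a\in(0,1)$), there is at least one positive root. For uniqueness I would show $P(a,\cdot)$, viewed in $y$, has exactly one sign change on $(0,\infty)$: the cleanest route is to verify that $\partial P/\partial x$ (equivalently the derivative in $y$) is negative at every positive root of $P$, i.e.\ $P$ is strictly decreasing wherever it vanishes on $(0,\infty)$; this is itself an algebraic condition in $(a,y)$ — the statement ``$P(a,x)=0\ \&\ x>0\ \&\ 0<a<1\ \Rightarrow\ \partial_x P(a,x)<0$'' — and can be discharged by a \texttt{Reduce} call of the kind described in the preamble to this section. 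Once uniqueness is in hand, the sign pattern of $\tH_2(a,x) = \frac{aP(a,x)}{384}\,e^{-x^2/2}$ follows immediately from \eqref{eq:tH2}, since $a e^{-x^2/2}/384 > 0$ and $P(a,x)$ changes sign from $+$ to $-$ exactly at $x=\tx_a$ as $x$ runs over $(0,\infty)$.

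The second half of the statement — that $\tx_a$ is strictly and continuously increasing in $a\in(0,1)$ — I would prove by the implicit function theorem. We have $P(a,\tx_a)=0$ identically in $a$, and I have already arranged that $\partial_x P(a,\tx_a) < 0 \ne 0$, so $\tx_a$ is a $C^1$ (indeed real-analytic) function of $a$ on $(0,1)$ with
\[
\fd{\tx_a}{a} = -\,\frac{\partial_a P(a,\tx_a)}{\partial_x P(a,\tx_a)}.
\]
Since the denominator is negative, strict monotonicity ($\fd{\tx_a}{a}>0$) is equivalent to $\partial_a P(a,\tx_a) > 0$, i.e.\ to the algebraic implication ``$P(a,x)=0\ \&\ x>0\ \&\ 0<a<1\ \Rightarrow\ \partial_a P(a,x)>0$.'' This is again a statement of exactly the form the Tarski/\texttt{Reduce} machinery handles, involving only polynomial (in)equalities in the two real variables $a$ and $x$, so I would verify it that way; continuity of $\tx_a$ is then automatic from the same implicit function theorem argument.

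The main obstacle I anticipate is the uniqueness of the positive root, since a priori $P(a,\cdot)$, being quartic in $y=x^2$, could have up to four positive roots, and a naive sign-change (Descartes) count is inconclusive because the coefficients $c_j(a)$ do not all have fixed signs on $(0,1)$. The honest fix is the ``$P$ is strictly decreasing at each of its positive zeros'' criterion above, which forces at most one sign change and hence at most one positive root; this reduces uniqueness to a single semialgebraic decision problem in $(a,x)$ (or, cleared of the exponential, a polynomial one), well within the scope of \texttt{Reduce}. I would note that the very same \texttt{Reduce} verification supplies the inequality $\partial_x P(a,\tx_a)<0$ needed for the implicit-function-theorem step, so both parts of the lemma rest on the same small collection of algebraic certificates, in the spirit of the proof strategy already described in Section~\ref{proofs}. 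Everything else — the boundary evaluation $P(a,0)>0$, the behavior $P(a,x)\to-\infty$, and the sign of $a e^{-x^2/2}/384$ — is elementary.
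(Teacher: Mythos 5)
Your proposal is correct in substance and rests on the same underlying machinery as the paper --- reducing every assertion to a semialgebraic decision problem dischargeable by \texttt{Reduce}, plus the implicit function theorem --- but the specific certificates you choose differ from the paper's. For existence/uniqueness and the sign pattern, the paper simply runs \texttt{Reduce[P>0 \&\& 0<a<1 \&\& x>0, x]} and reads off that the solution set is an interval $(0,\tx_a)$; your route (IVT for existence from $P(a,0)=96-12a+33a^2>0$ and $P\to-\infty$, then ``$\partial_x P<0$ at every positive zero'' to force a single transversal downcrossing) is more structured and makes the logic explicit rather than delegating it to the shape of Mathematica's output. For strict monotonicity, the paper uses a \emph{global} two-parameter certificate --- \texttt{Reduce[P(a,x)==0==P(b,y) \&\& 0<a<b<1 \&\& 0<y<=x]} returns \texttt{False}, so any positive root at the larger parameter strictly exceeds any positive root at the smaller one --- and invokes the IFT (with the certificate $\pd Px(a,\tx_a)\ne0$) only for continuity. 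You instead get monotonicity \emph{locally} from $\fd{\tx_a}{a}=-\partial_aP/\partial_xP$ together with the extra certificate $\partial_aP(a,\tx_a)>0$. That is sufficient, but note it is slightly stronger than what strict monotonicity requires (a priori $\partial_aP$ could vanish at isolated roots while $\tx_a$ remained strictly increasing), whereas the paper's global comparison sidesteps this; in exchange, your version gets continuity, differentiability, and monotonicity all from one IFT application. Two small corrections: the coefficient of $y^4=x^8$ in $P$ is $-12a$, not $-96-12a$ (the $-96$ multiplies $x^4-2x^2-1$ and so feeds into $c_2$ and below) --- harmless, since the sign conclusion is unchanged --- and you should state explicitly that the sign pattern for $\tH_2$ follows because $\tH_2(a,x)=\frac{aP(a,x)}{384}e^{-x^2/2}$ agrees with $P(a,x)$ in sign for $a>0$, which is exactly how the paper phrases it.
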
 

\begin{lemma}\label{lem:B<C/p}
For all $p\ge4$ inequality \eqref{eq:B<C/p} holds. 
\end{lemma}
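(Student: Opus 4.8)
The plan is to reduce inequality \eqref{eq:B<C/p} to an algebraic (plus exp/log) system that the Tarski--Collins procedure (\texttt{Reduce} in Mathematica) can decide, after reshaping it by hand into a tractable form. With $a=1/p\in(0,\tfrac14]$, dividing $768\,B(a,\tx_a)<768\,Ca$ by $a>0$ shows it is enough to prove
\begin{equation*}
8x\big[5a^2x^2+a(3x^6-7x^4-5x^2-3)+24(x^2+1)\big]\vpi(x)+33a^2\big(2\Phi(x)-1\big)<768\,C
\end{equation*}
for every pair $(a,x)$ with $a\in(0,\tfrac14]$, $x>0$ and $P(a,x)=0$. Here $\tx_a$ enters only through the algebraic constraint $P(a,\tx_a)=0$ of Lemma~\ref{lem:xxa}, so $a$ and $x$ may be treated as two real unknowns linked by that polynomial equation; and since $P(a,x)$ is quadratic in $a$, one may instead solve $P(a,x)=0$ for $a$ as an explicit radical function of $x$ and reduce to a single real variable $x$ running over the short interval $\sqrt{1+\sqrt2}=\tx_0<x\le\tx_{1/4}$, using that $a\mapsto\tx_a$ is increasing (Lemma~\ref{lem:xxa}).

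Two transcendental ingredients have to be handled. The $\Phi$-term is easiest: $0<2\Phi(x)-1<1$ for $x>0$, so it is bounded by $33a^2$, and this crude replacement is affordable. Indeed $\tx_0$ is the maximizer of $g(x):=192\,x(x^2+1)\vpi(x)$ (the positive solution of $x^4-2x^2-1=0$, i.e.\ of $P(0,x)=0$), and a short computation gives $g(\tx_0)=768\,C$ --- which is exactly what makes $C$ the constant it is. Since $g'(\tx_0)=0$ and $3\tx_0^6-7\tx_0^4-5\tx_0^2-3=-4(2+\sqrt2)<0$, the left-hand side above equals $768\,C$ minus a term of order $a$ with strictly positive coefficient, plus $O(a^2)$; thus the genuine slack in the inequality is of order $a$ and dominates the $O(a^2)$ cost of the $\Phi$-replacement (so no grosser estimate is possible, and a certified computation is needed). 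The surviving factor $\vpi(x)=e^{-x^2/2}/\sqrt{2\pi}$ has algebraic logarithm, so the remaining inequality is of the ``algebraic $+$ exponential'' type to which, as explained in the discussion of \texttt{Reduce} above, the decision procedure still applies.

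The human guidance is then the following: feed \texttt{Reduce} the conjunction of $0<a\le\tfrac14$, $x>0$, $P(a,x)=0$ and the $\Phi$-free inequality, rather than the raw system containing $\tx_a$, $\vpi$ and $\Phi$ all at once; localize $\tx_a$ by explicit algebraic bounds $x_-(a)<\tx_a<x_+(a)$ obtained by evaluating $P(a,\cdot)$ at convenient rational-in-$a$ points (using $P(a,0)=96-12a+33a^2>0$, $P(a,x)\to-\infty$ as $x\to\infty$, and uniqueness of the positive root of Lemma~\ref{lem:xxa}), or equivalently substitute the radical solution $a=a(x)$ of $P(a,x)=0$; and, if a single call is still too heavy, split $(0,\tfrac14]$ into a few subintervals and run \texttt{Reduce} on each piece. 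Each resulting subsystem is small enough to be certified mechanically.

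The main obstacle is precisely the one the paper flags: the inequality is tight to leading order in $a$, so no gross bound will close it, while the naive combined system --- implicit $\tx_a$, the exponential in $\vpi$, and $\Phi$ --- is far beyond what \texttt{Reduce} can process in reasonable time and memory. The whole difficulty therefore lies in the reformulation: cheap elimination of $\Phi$ at $O(a^2)$ cost, replacing the implicit $\tx_a$ by an explicit algebraic constraint (or eliminating $a$ in its favour), and partitioning the parameter range, so that the problem becomes a finite union of decidable algebraic/exp-log pieces. Once that is in place the remaining verification is routine.
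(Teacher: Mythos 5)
Your overall framing --- pass to the variable $x=\tx_a\in(\tx_0,\tx_{1/4}]$, solve the quadratic-in-$a$ equation $P(a,x)=0$ for $a$ as a radical function of $x$, and hand the resulting one-variable system to \texttt{Reduce} --- is indeed the skeleton of the paper's argument. But the step you rely on to remove $\Phi$, namely replacing $2\Phi(x)-1$ by its upper bound $1$, is fatal: the strengthened inequality is \emph{false} near $a=\tfrac14$. Numerically, $\tx_{1/4}\approx 1.7587$, and in the normalized form $768\,B(a,\tx_a)/a<768\,C\approx 121.52$ the true left-hand side at $a=\tfrac14$ is about $121.49$, a margin of only $\approx 0.03$ (relative margin $\approx 2.4\times10^{-4}$); your replacement adds $33a^2\big(2-2\Phi(\tx_{1/4})\big)\approx 2.0625\times 0.079\approx 0.16$, overshooting $768\,C$ by about $0.13$. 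Your own justification --- that the slack is of order $a$ while the cost is $O(a^2)$ --- is a small-$a$ asymptotic statement and simply does not control the endpoint $a=\tfrac14$, which is exactly where \eqref{eq:B<C/p} is tightest; the paper's remark that the threshold $p\ge4$ is ``very close to the best possible one'' for \eqref{eq:B<C/p} (see Figure~\ref{fig:rel-errs}) is precisely the warning that \emph{no} lossy estimate of the $\Phi$-term can be afforded there.

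The paper's proof circumvents this by keeping $\Phi$ exact and eliminating it through differentiation rather than estimation: it studies $\rho(x)=\big(B(a_x,x)-Ca_x\big)/a_x^3$, where the division by $a_x^3$ is chosen so that the $\Phi$-term of $\rho$ becomes $\tfrac{33}{768}(2\Phi(x)-1)$ with a \emph{constant} coefficient; hence $\Phi$ disappears from $\rho'$, leaving only $\vpi$, radicals and the constant $C\sqrt\pi$, and one further differentiation cancels the remaining exponential factor, so that every \texttt{Reduce} call is on a genuinely algebraic (Tarski-type) system. The conclusion $\rho<0$ is then obtained from $\rho'>0$ together with the single endpoint evaluation $\rho(\tx_{1/4})<0$, which is where the transcendental values $\Phi(\tx_{1/4})$, $\vpi(\tx_{1/4})$ are finally (and only) needed. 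So beyond the false strengthening, your plan also leaves the final decision problem with a free exponential $\vpi(x)$ in it, whereas the paper's manipulations are designed to reduce everything to algebraic systems plus two point evaluations. To repair your argument you would have to retain $2\Phi(x)-1$ exactly (or bound it with an error $o(a^2)$ uniformly up to $a=\tfrac14$, which a constant bound cannot do) and then still find a way to decide the resulting $\Phi$-and-$\vpi$ system --- which is essentially the problem the paper's $\rho$-construction was invented to solve.
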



Now one is ready to prove Proposition~\ref{prop:} and Theorems~\ref{th:bound} and \ref{th:1/4}, which will be done in this order. 

\begin{proof}[Proof of Proposition~\ref{prop:}.] 
Take indeed any $p$ and $q$ such that $0<p<q\le\infty$. 
By 
Lemma~\ref{lem:x_p} and the symmetry of the $\sd$, 
\begin{multline*}
	\quad d_\tv(p,q)=\int_0^{x_{p,q}}(f_q-f_p)+\int_{x_{p,q}}^\infty(f_p-f_q) \\ 
	=2\int_0^{x_{p,q}}(f_q-f_p)=2\big(F_q(x_{p,q})-F_p(x_{p,q})\big)=2d_\ko(p,q),  
\end{multline*}
which proves part (i) of Proposition~\ref{prop:}. 
Part (ii) of the proposition now follows by the second equality in \eqref{eq:ko,tv} and the stochastic monotonicity result of \cite{mono-student}, which implies that $F_p(x)$ is strictly increasing in $p\in(0,\infty]$ for each $x\in(0,\infty)$. 
\end{proof}

\begin{proof}[Proof of Theorem~\ref{th:bound}] 
The equality in \eqref{eq:<B} immediately follows from Proposition~\ref{prop:}. 
Take any $a\in(0,\frac{29}{50}]$ (corresponding to $p\ge\frac{50}{29}$). 
We claim that $x_p<\tx_a$, where $x_p$ and $\tx_a$ are as in \eqref{eq:x_p:=} 
and Lemma~\ref{lem:xxa}, respectively. 
Assume the contrary, that $x_p\ge\tx_a$. 
Note that $\tH_2(\frac{29}{50},\frac{123}{50})<0$; so, by Lemma~\ref{lem:xxa}, $\tx_{29/50}<\frac{123}{50}$ and hence $\tx_a<\frac{123}{50}$ for all $a\in(0,\frac{29}{50}]$. 
Therefore, in view of Lemmas~\ref{lem:H} and \ref{lem:H<tH}, 
\begin{equation}\label{eq:< <}
	\sqrt{2\pi}\big(f_\infty(x)-f_p(x)\big)<H(a,x)<\tH_2(a,x)  
\end{equation}
for all $x\in(0,\tx_a]$ --- still assuming that $a\in(0,\frac{29}{50}]$. 
On the other hand, by Lemma~\ref{lem:x_p}, $0\le f_\infty(x)-f_p(x)$ for all $x\in(0,x_p]$ and hence, by the assumption, for all $x\in(0,\tx_a]$. 
Now \eqref{eq:< <} implies $0<\tH_2(a,\tx_a)$, which contradicts Lemma~\ref{lem:xxa}. 
Thus, indeed $x_p<\tx_a$. 
Recalling now \eqref{eq:d=} and using \eqref{eq:< <} and (again) Lemma~\ref{lem:xxa}, and also recalling \eqref{eq:tH2}, one has 
\begin{align*}
	d_\ko(p)=F_\infty(x_p)-F_p(x_p)
	&=\int_0^{x_p}\big(f_\infty(x)-f_p(x)\big)\dd x \\
	&<\int_0^{x_p}\frac{\tH_2(x)}{\sqrt{2\pi}}\dd x 
	<\int_0^{\tx_a}\frac{\tH_2(x)}{\sqrt{2\pi}}\dd x.  
\end{align*}
It remains to verify that $\int_0^x\frac{\tH_2(u)}{\sqrt{2\pi}}\dd u=B(a,x)$, which can be done either by hand or using Mathematica. 
The proof of Theorem~\ref{th:bound} is now complete, modulo the lemmas. 
\end{proof}

\begin{proof}[Proof of Theorem~\ref{th:1/4}] 
The relations in \eqref{eq:<C/p} immediately follow by Theorem~\ref{th:bound} and Lemma~\ref{lem:B<C/p}. 
It remains to verify \eqref{eq:lim=C}. 
First here, use l'Hospital's rule to find that for all real $x$ 
\begin{equation}\label{eq:lim}
	\lim_{a\downarrow0}\frac{f_{1/a}(x)-f_\infty(x)}a	
	=\lim_{a\downarrow0}\pd{f_{1/a}(x)}a=\la(x):=\frac{x^4-2 x^2-1}4\,\vpi(x);
\end{equation}
the second equality in \eqref{eq:lim} can be obtained either using the Mathematica commands \verb!D! (for differentiation), \verb!Simplify!, and \verb!Limit! or otherwise. 

Next, introduce 
\begin{equation}\label{eq:ca,ga}
	c_a:=f_{1/a}(0)\quad\text{and}\quad g_a(x)=f_{1/a}(x)/c_a 
\end{equation}
for all real $a\ge0$,  
assuming the convention $1/0:=\infty$, so that $f_{1/a}(x)=c_a g_a(x)$. 
Then for all real $a\ge0$ and all real $x$ 
\begin{equation}\label{eq:dif}
	|f_{1/a}(x)-f_\infty(x)|
	\le|c_a-c_0|g_a(x)+c_0|g_a(x)-g_0(x)|
	\le|c_a-c_0|+|g_a(x)-g_0(x)|, 
\end{equation}
since $g_a(x)\le1$ and $c_0=1/\sqrt{2\pi}<1$. 
By \eqref{eq:lim} and \eqref{eq:ca,ga}, the ratio $\frac{|c_a-c_0|}a$ is continuous in $a>0$ and converges to a finite limit ($\vpi(0)/4$) as $a\downarrow0$, and hence is bounded in $a\in(0,1]$. 
Now note that 
\begin{equation*}
	\Big|\pd{g_a(x)}a\Big|=(1 + a x^2)^{-(1 + 3 a)/(2 a)}\,|(Dg)(a,x)| 
	\le|(Dg)(a,x)|,
\end{equation*}
where
\begin{equation*}
	(Dg)(a,x):=\frac{\left(1+a x^2\right) \ln \left(1+a x^2\right)-a (1+a) x^2}{2 a^2}. 
\end{equation*}
Using the Taylor expansion $\ln(1+u)=u-\th u^2/2$ for $u>0$ and some $\th=\th(u)\in(0,1)$, one sees that $(Dg)(a,x)$ is a polynomial in $a,x,\th$ and hence bounded in $(a,x)\in(0,1]\times[0,\tx_0]$ --- note that, in accordance with the definition of $\tx_a$ in Theorem~\ref{th:bound}, 
$$\tx_0=\sqrt{1 + \sqrt2}\in(0,\infty);$$ 
hence, $\Big|\pd{g_a(x)}a\Big|$ is bounded in $(a,x)\in(0,1]\times[0,\tx_0]$ and, by the mean value theorem, so is $\frac{|g_a(x)-g_0(x)|}a$.  
Recalling also \eqref{eq:dif} and that the ratio $\frac{|c_a-c_0|}a$ is bounded in $a\in(0,1]$, one concludes that 
the ratio $\frac{|f_{1/a}(x)-f_\infty(x)|}a$ is bounded in $(a,x)\in(0,1]\times[0,\tx_0]$. 
So, by \eqref{eq:lim} and dominated convergence, 
\begin{align*}
	p\,d_\ko(p)\ge p\,\big[F_\infty(\tx_0)-F_p(\tx_0)\big]
	&=-\int_0^{\tx_0}\frac{f_{1/a}(x)-f_\infty(x)}a \dd x \\
	&\underset{a\downarrow0}\longrightarrow
	-\int_0^{\tx_0}\la(x) \dd x
	=\frac{(\tx_0^3+\tx_0)\,\vpi(\tx_0)}4=C,  
\end{align*}
where $\la(x)$ is defined in \eqref{eq:lim}. 
This, together with \eqref{eq:<C/p}, implies \eqref{eq:lim=C}. 
The proof of Theorem~\ref{th:1/4} is now complete, modulo the lemmas. 
\end{proof}

\subsection{Proofs of the lemmas}\label{proofs-lemmas}

\begin{proof}[Proof of Lemma~\ref{lem:x_p}] 
Take indeed any $p$ and $q$ such that $0<p<q\le\infty$.  
A key observation here (borrowed from \cite{mono-student}) is that 
$r_{p,q}(x)$ 
decreases in $x\in[0,1]$ and increases in $x\in[1,\infty)$. 
Moreover, 
by the lemma in \cite{mono-student}, $f_p(0)$ increases in $p>0$ and hence 
$r_{p,q}(0)<1$. 
On the other hand, it is easy to see that $r_{p,q}(x)\to\infty$ as $x\to\infty$. 
This completes the proof of part (i) of Lemma~\ref{lem:x_p}, 
which in turn implies that there is a unique $x_{p,q}\in(0,\infty)$ such that 
$r_{p,q}(x)<1$ for $x\in[0,x_{p,q})$, $r_{p,q}(x_{p,q})=1$, and $r_{p,q}(x)>1$ for $x\in(x_{p,q},\infty)$ (at that necessarily $x_{p,q}>1$). 
In other words, one has the relations \eqref{eq:x_p}. 
Since $(F_q-F_p)'=f_q-f_p$, one now sees that $F_q(x)-F_p(x)$ increases in $x\in[0,x_{p,q}]$ from $0$ to $F_q(x_{p,q})-F_p(x_{p,q})>0$, and then decreases in $x\in[x_{p,q},\infty)$ to $0$. 
So, \eqref{eq:d=} follows by the symmetry of the Student and standard normal distributions. 
Thus, the lemma is completely proved. 
\end{proof}

\begin{proof}[Proof of Lemma~\ref{lem:H<tH}]
Indeed assume that $a\in(0,\frac{29}{50}]$ and $x\in(0,\frac{123}{50})$. 
Consider the difference 
\begin{equation*}
	\tilde\de(a):=\tilde\de(a,x):=H(a,x)-\tH_2(a,x)
	=\frac{\tP(a,x)}{384 e^{x^2/2}} 
	- L_2(a)(1+ax^2)^{-\frac{1+a}{2a}},  
\end{equation*}
where 
\begin{equation*}
	\tP(a,x):=384-aP(a,x). 
\end{equation*}
We have to show that $\tilde\de(a,x)<0$. Obviously, the system of inequalities $\tP(a,x)\le0$, $0<a\le\frac{29}{50}$, and $0<x<\frac{123}{50}$ is algebraic and thus, by the well-known result of Tarski \cite{tarski48} can be solved completely algorithmically. 
The 
Mathematica command 
\verb!Reduce[tP<=0 && 29/50>=a>0 && 123/50>x>0]! outputs \verb!False!, 
where \verb!tP! stands for $\tP(a,x)$. 
This means that $\tP(a,x)>0$ --- for all $a\in(0,\frac{29}{50}]$ and $x\in(0,\frac{123}{50})$. 
So, 
$\tilde\de(a,x)$ equals 
\begin{equation*}
	\de(a):=\de(a,x):=
	\ln\frac{\tP(a,x)}{384 e^{x^2/2}} 
	- \ln\Big(L_2(a)(1+ax^2)^{-\frac{1+a}{2a}}\Big)  
\end{equation*}
in sign. 
Introduce 
\begin{align*}
	(D\de)(a)&:=4a^2\de'(a)
	=\frac{4 a Q(a,x)}{\tP(a,x)}-\frac{2 (1+a)}{1+ax^2}-2 \ln \left(1+ax^2\right) \\
	&\qquad\qquad\qquad+\frac{1}{6} \left(72
   a+\frac{21}{1+a}-\frac{6}{1+2a}+\frac{1}{1+3a}-4\right),
\\
   (DD\de)(a)&:=
   \frac{(D\de)'(a)}{2 a^3}\, 
   (1+a)^2 (1+2a)^2 (1+3a)^2  \left(1+ax^2\right)^2\,\tP(a,x)^2, 
\end{align*}
where 
\begin{equation*}
	Q(a,x):=a^3 \left(20 x^4-60x^2-33\right)-96 a \left(x^4-2 x^2-1\right)-768. 
\end{equation*}
Note that $(DD\de)(a)$ is a polynomial in $a$ and $x$, of degree $11$ in $a$ and of degree $20$ in $x$. 
The command 
\verb!Reduce[DDde>=0 && 29/50>=a>0 && 123/50>x>0]! outputs \verb!False!, 
where \verb!DDde! stands for $(DD\de)(a)$. 
This means that $(DD\de)(a)<0$ --- for all $a\in(0,\frac{29}{50}]$ and $x\in(0,\frac{123}{50})$.
On the other hand, one can check (using Mathematica or otherwise) that $(D\de)(0+)=\de(0+)=0$.  
Thus, one concludes that indeed $\de(a,x)<0$ and hence $\tilde\de(a,x)<0$. 
\end{proof}

\begin{proof}[Proof of Lemma~\ref{lem:xxa}] 
Take indeed any $a\in(0,1)$. 
By \eqref{eq:tH2}, $\tH_2(a,x)$ equals $P(a,x)$ in sign. 
So, the first two sentences of Lemma~\ref{lem:xxa} can be proved using 
the Mathematica command 
\verb!Reduce[P>0! 
\verb!&& 0<a<1 && x>0,x]!.   
That $\tx_a$ is strictly increasing in $a\in(0,1)$ now follows by 
the command 
\verb!Reduce[PP[a,x]==0! \break
\verb!==PP[b,y] && 0<a<b<1 && 0<y<=x]!,
which (takes about 15 seconds on a standard laptop and) outputs \verb!False!; here,  
\verb!PP[a,x]! 
stands for $P(a,x)$. 
Finally, the continuity of $\tx_a$  in $a$ can be verified by the implicit function theorem; here, it is enough to check that $\pd Px(a,\tx_a)\ne0$ for all $a\in(0,1)$, 
which can be done using the command 
\verb!Reduce[P==0 && DPx==0 && 0<a<1 && x>0]! (with \verb!DPx! standing for $\pd Px(a,x)$), 
which outputs \verb!False!. 
\end{proof}

\begin{proof}[Proof of Lemma~\ref{lem:B<C/p}]
By Lemma~\ref{lem:xxa}, $a\mapsto\tx_a$ is a one-to-one map of $(0,\frac14]$ onto $(\tx_0,\tx_{1/4}]$. 
Let $(\tx_0,\tx_{1/4}]\ni x\mapsto a_x\in(0,\frac14]$ be the corresponding inverse map. 
So, it suffices to 
show that $B(a_x,x)<Ca_x$ for all $x\in(\tx_0,\tx_{1/4}]$. 
Assume indeed that $x\in(\tx_0,\tx_{1/4}]$ and consider the ratio 
\begin{equation}\label{eq:rho}
	\rho(x):=\frac{B(a_x,x)-Ca_x}{a_x^3}.   
\end{equation}
Introduce also 
\begin{align*}
	q_1(x):=&3 + 12 x^2 + 30 x^4 - 28 x^6 + 3 x^8, \\ 
	q_2(x):=&-783 - 2952 x^2 - 1284 x^4 + 2952 x^6 - 234 x^8 - 1608 x^{10}  \\
 &+ 964 x^{12}- 168 x^{14} + 9 x^{16}, \\ 
	q_3(x):=&33 + 60 x^2 - 20 x^4. 
\end{align*}
The command \verb!Reduce[q3 <= 0 && xxa[0] < x <= xxa[1/4]]! \big(with \verb!q3! and \verb!xxa[a]! standing for $q_3(x)$ and $\tx(a)$\big)
outputs \verb!False!, which shows that $q_3(x)>0$. 
Now using the command 
\verb!Reduce[P==0 && 0<a<=1/4 && xxa[0]<x<=xxa[1/4]]!,  
where $P$ stands again for the polynomial $P(a,x)$ as in \eqref{eq:P}, 
one finds that 
\begin{equation*}
	a_x=2\,\frac{q_1(x)+\sqrt{q_2(x)}}{q_3(x)};   
\end{equation*}
moreover, $a_x>0$ and $q_3(x)>0$ imply that $q_1(x)+\sqrt{q_2(x)}>0$. 
So, in view of \eqref{eq:rho}, 
\begin{multline*}
	\rho'(x)\frac{e^{x^2/2}}x\,  24 \sqrt{\pi} \sqrt{q_2(x)} \big(q_1(x) + \sqrt{q_2(x)}\big)^3\\ 
	 = \sqrt{2} \big(p_{00}(x) + p_{01}(x)\sqrt{q_2(x)}\,\big) +
  e^{x^2/2}\, C\sqrt\pi \big(p_{10}(x) + p_{11}(x)\sqrt{q_2(x)}\,\big), 
\end{multline*}
where 
\begin{align*}
	p_{00}(x) :=& 105705 x - 1945539 x^3 - 13305006 x^5 - 
    26650971 x^7 - 3174714 x^9\\ 
    &+ 49512627 x^{11} + 23388786 x^{13} - 
    45078003 x^{15} - 9879213 x^{17} \\
    &+ 26892909 x^{19} - 5379786 x^{21} - 
    8094383 x^{23} + 6008972 x^{25} \\
    &- 1844301 x^{27} + 296622 x^{29} - 
    24435 x^{31} + 810 x^{33}, \\
	p_{01}(x) :=& -21789 x - 259929 x^3 - 492804 x^5 + 366741 x^7 + 
    967263 x^9 \\
    &- 120468 x^{11} - 487080 x^{13} + 188214 x^{15} + 
    177266 x^{17} - 151973 x^{19}\\
    & + 43674 x^{21} - 5625 x^{23} + 270 x^{25}, \\
	p_{10}(x) :=& -1368576 + 9287136 x^2 + 67830048 x^4 + 
    113324832 x^6 - 52129440 x^8\\
    & - 230541408 x^{10} + 74263392 x^{12} + 
    151161696 x^{14} - 110996640 x^{16} \\
    &+ 30085440 x^{18} - 3715200 x^{20} + 
    172800 x^{22}, \\
	p_{11}(x) :=& 171072 + 1974240 x^2 + 2340576 x^4 - 4409568 x^6 - 
    3045600 x^8 \\
    &+ 2911680 x^{10} - 700800 x^{12} + 57600 x^{14}. 
\end{align*}
Executing now the command 
\verb!Reduce[z<=0 && xxa[0]<x<=xxa[1/4]]! with \verb!z! standing for $p_{10}(x) + p_{11}(x)\sqrt{q_2(x)}$, one sees that $p_{10}(x) + p_{11}(x)\sqrt{q_2(x)}>0$, so that $\rho'(x)$ equals 
\begin{equation*}
	\rho_1(x):=\frac{\sqrt{2} \big(p_{00}(x) + p_{01}(x)\sqrt{q_2(x)}\big)}
	{e^{x^2/2}\, \big(p_{10}(x) + p_{11}(x)\sqrt{q_2(x)}\big)}
	+C\sqrt\pi 
\end{equation*}
in sign. 


Next, 
\begin{align*}
	\rho_2(x):=&\rho_1'(x)\, e^{x^2/2}\, (p_{10}(x) +p_{11}(x) \sqrt{q_2(x)})^2/\sqrt{2} \\
	=&c_0(x) + c_1(x) \sqrt{q_2(x)} + c_2(x) q_2(x) + c_3(x)/\sqrt{q_2(x)}, 
\end{align*}
where 
\begin{align*}
	c_0(x):=&p_{10}(x) p_{00}'(x)-p_{00}(x) p_{10}'(x)-x p_{00}(x)
   p_{10}(x), \\
	c_1(x):=&p_{11}(x) p_{00}'(x)- p_{00}(x) p_{11}'(x)- x p_{00}(x)
   p_{11}(x)+ p_{10}(x) p_{01}'(x) \\
   &- p_{01}(x) p_{10}'(x)- x
   p_{01}(x) p_{10}(x), \\
	c_2(x):=&p_{11}(x) p_{01}'(x)-p_{01}(x) p_{11}'(x)-x p_{01}(x)
   p_{11}(x), \\
	c_3(x):=&q_2'(x) \big(p_{01}(x) p_{10}(x)-p_{00}(x) p_{11}(x)\big)/2. 
\end{align*}
The command \verb!Reduce[rho2[x]<=0 && xxa[0]<x<=xxa[1/4]]! outputs \verb!False!. 
So, $\rho_2(x)>0$ (for all $x\in(\tx_0,\tx_{1/4}]$) and hence $\rho_1(x)$ increases in such $x$.  
Moreover, $\rho_1(\tx_0)=0$, which implies that $\rho_1>0$. That is, $\rho'>0$ and $\rho$ is increasing on the interval $(\tx_0,\tx_{1/4}]$, to $\rho(\tx_{1/4})<0$. 
Thus, $\rho<0$ on $(\tx_0,\tx_{1/4}]$, which 
implies that indeed $B(a_x,x)<Ca_x$ for all $x\in(\tx_0,\tx_{1/4}]$. 
\end{proof}


\bibliographystyle{abbrv}


\bibliography{C:/Users/Iosif/Dropbox/mtu/bib_files/citations12.13.12}

\end{document}